\documentclass[11pt, letterpaper,final]{amsart}
\usepackage{amsfonts,amsmath, amsthm, amssymb, latexsym}
\usepackage[english]{babel}
\usepackage[utf8]{inputenc}
\usepackage[all]{xy}
\usepackage{xspace}
\usepackage{amscd}
\usepackage{comment}
\usepackage{setspace}
\usepackage{enumerate}
\usepackage{stmaryrd}
\usepackage{xcolor}
\usepackage{tikz-cd}
\usepackage{hyperref}
\usepackage[mathscr]{eucal}

\usepackage[hmargin=3cm,vmargin=3cm]{geometry}

\newcommand{\Aut}{\operatorname{Aut}}

\newcommand{\Zp}{\mathbb Z_{\geq 0}}
\newcommand{\Lk}[1]{L_k(#1)}

\newcommand{\spn}{\operatorname{span}_k}
\newcommand{\Mmult}[2]{\mathsf M(#1,#2)}
\newcommand{\SSE}{\sim_{\mathrm{SSE}}}
\newcommand{\SE}{\sim_{\mathrm{SE}}}

\newcommand{\ESSE}{\sim_{\mathrm{ESSE}}}
\newcommand{\Kgr}{K_0^{\mathrm{gr}}}
\newcommand{\one}{\mathbf 1}
\newcommand{\transpose}{\mathsf t}

\newtheorem{thm}{Theorem}[section]
	\newtheorem{cor}[thm]{Corollary}
	\newtheorem{lem}[thm]{Lemma}
	\newtheorem{prp}[thm]{Proposition}

	\newtheorem{theoremintro}{Theorem}
	
    \newtheorem{corollaryintro}[theoremintro]{Corollary}

\theoremstyle{definition}

\begin{document}
	\title[Graded classification of Leavitt path algebras]{Graded classification of Leavitt path algebras \\ in terms of strong shift equivalence}
	
	        \author{Boris Bilich}
        \address{Department of Mathematics, University of Haifa, Haifa, Israel, and Department of Mathematics, University of G\"ottingen, G\"ottingen, Germany.}
        \email{bilichboris1999@gmail.com}
        
        \author{Adam Dor-On}
        \address{Department of Mathematics, University of Haifa, Mount Carmel, Haifa 3103301, Israel.}
        \email{adoron.math@gmail.com}

        \date{\today}
	\subjclass[2020]{}
	
\subjclass{16S88, 16D70, 37B10}

\keywords{Leavitt path algebras, Hazrat’s conjecture, graded algebras, graded Morita equivalence, graded Grothendieck group, shift equivalence, strong shift equivalence.}

\thanks{B. Bilich was partially supported by a Bloom PhD scholarship at Haifa University. A. Dor-On was partially supported by an NSF-BSF grant no. 2350543 / 2023695 (respectively), by an NSF / BSF grant no. 2452324 / 2024734 (respectively), and a DFG Middle-Eastern collaboration project no. 529300231.}

\begin{abstract}
Given two finite essential adjacency matrices $A$ and $B$, Hazrat's graded classification conjectures posit that an order preserving $\mathbb{Z}[x,x^{-1}]$-module isomorphism of $K_0$ groups implies graded Morita equivalence of the Leavitt path algebras of $A$ and $B$, while the pointed version predicts a graded isomorphism of the Leavitt path algebras when the $K_0$ group isomorphism additionally preserves the class of the regular module. For any field $k$, we show that the Leavitt path algebras over $k$ of $A$ and $B$ are graded Morita equivalent if and only if $A$ and $B$ are strong shift equivalent. By appealing to counterexamples of Kim and Roush from symbolic dynamics, this shows that Hazrat's graded classification conjectures are false.
\end{abstract}

\maketitle

\section{Introduction}

Leavitt path algebras occupy an especially fruitful point of contact between pure algebra, operator algebras, and symbolic dynamics. We will say that a square matrix $A$ with entries in $\mathbb Z_{\geq 0}$ indexed by a finite set $V_A$ is \emph{essential} if it has no zero rows and no zero columns. We write $G_A$ for the directed graph with vertex set $V_A$ and exactly $A_{uv}$ edges from $u$ to $v$. Thus, $A$ gives rise on the one hand to a shift of finite type and, on the other hand, to the Leavitt path algebra $L_k(G_A)$. Leavitt path algebras were introduced in their graph-theoretic form by Abrams and Aranda Pino \cite{AbramsArandaPino} and by Ara, Moreno, and Pardo \cite{AraEtAlNonstable} (see also the monograph \cite{AAMS}), and are purely algebraic counterparts of the Cuntz-Krieger graph $C^*$-algebra which retains the Cuntz--Krieger relations and canonical grading \cite{CuntzKrieger, CuntzReducible}. This common ground between C*-algebras and pure algebra has led to a rather striking phenomenon. Structural and classification results on one side frequently have analogues on the other, while the points at which the analogy breaks tend to reveal subtle information about the underlying dynamics (see the recent survey of Corti\~nas and Hazrat \cite{CortinasHazratSurvey} for a broader account).

The problem considered here arises from one of the most basic
classification questions in symbolic dynamics. In his foundational work \cite{Williams}, Williams translated the conjugacy problem for shifts of finite type into a problem concerning factorizations of nonnegative integer matrices. Two square nonnegative integer matrices $A$ and $B$ are said to be \emph{elementary strong shift related} if there are rectangular matrices $R,S$ over $\mathbb Z_{\geq 0}$ such that
$$
 A=RS,\ \ \text{and} \ \ B=SR.
$$
Strong shift equivalence is then the transitive closure of elementary strong shift relation. Williams proved that the shifts of finite type defined by essential matrices are conjugate precisely when their associated matrices are strong shift equivalent.

Williams also introduced the weaker relation of \emph{shift
equivalence}. $A$ and $B$ are said to be shift equivalent if, for some lag $\ell\geq 1$, there are rectangular nonnegative integer matrices $R,S$ such that
\begin{equation*}
 AR=RB,\ \ \ \ BS=SA,\ \ \ \ RS=A^\ell,\ \ \ \ SR=B^\ell.
\end{equation*}
Strong shift equivalence always implies shift equivalence, and the latter relation is governed by Krieger's dimension triple \cite{Krieger,LindMarcus}, and is decidable by work of Kim and Roush \cite{KimRoushDecidabilityI,KimRoushDecidabilityII}. Williams conjectured that shift equivalence and strong shift equivalence coincide. However, Kim and Roush disproved Williams' conjecture for reducible matrices \cite{KimRoushReducible}, and then for primitive matrices \cite{KimRoush}. At the same time, the decidability problem for strong shift equivalence, and hence for conjugacy of shifts of finite type, remains one of the central open problems in symbolic dynamics (see \cite{BoyleSchmieding,JeandelSearch} for recent perspectives).

In tandem with the early development of this theory, Cuntz and Krieger associated to a finite matrix $A$ a $C^*$-algebra $\mathcal O_A$ carrying a canonical gauge action and a canonical diagonal subalgebra \cite{CuntzKrieger,CuntzReducible}. Their construction initiated a fruitful interaction between symbolic dynamics and operator algebras. Krieger's dimension triple is recovered from equivariant $K$-theory, so a gauge-preserving stable isomorphism can only occur when the matrices are shift equivalent. For primitive matrices, Bratteli and Kishimoto proved the converse using deep results on trace-scaling automorphisms of stable AF-algebras \cite{BratteliKishimoto}. More recently, shift equivalence has been characterized by gauge-equivariant stable homotopy equivalence of the corresponding graph $C^*$-algebras \cite{BilichDorOnRuizHomotopy}. Thus, the hierarchy between shift equivalence and strong shift equivalence has several natural graph-algebraic shadows. For instance, preserving the diagonal subalgebra via the graded equivalence recovers conjugacy \cite{CarlsenRoutCstar, CarlsenRoutSteinberg}, while weaker graded equivalences seem to retain only the eventual, dimension-group data.

Hazrat proposed that the natural grading on a Leavitt path algebra might provide an algebraic counterpart to this picture. The algebra $L_k(G_A)$ has its standard $\mathbb Z$-grading, with vertices in degree zero, edges in degree one, and ghost edges in degree minus one. Its graded Grothendieck group $K_0^{\mathrm{gr}}(L_k(G_A))$ is a partially ordered $\mathbb Z[x,x^{-1}]$-module, where the action of $x$ is induced by suspension. For a finite sinkless sourceless graph this module
is naturally identified, up to the transpose dictated by the adjacency convention, with Krieger's dimension triple. In particular, by \cite{HazratDynamics,HazratClassification} (see also
\cite{HazratBook,CortinasHazratSurvey}) we have that, as ordered $\mathbb Z[x,x^{-1}]$-modules, 
\begin{equation}\label{eq:Kgr-SE-intro}
 K_0^{\mathrm{gr}}(L_k(G_A))\cong
 K_0^{\mathrm{gr}}(L_k(G_B))
 \quad\Longleftrightarrow\quad
 A\sim_{\mathrm{SE}}B
\end{equation}
and the class $[L_k(G_A)]$ of the regular module corresponds to a distinguished order unit. The positive cone of the graded Grothendieck group also has a graph-theoretic presentation as the talented monoid. This monoid-theoretic viewpoint has been used to recover substantial graph-theoretic, ideal-theoretic, and growth information from
$K_0^{\mathrm{gr}}$ \cite{HazratLiTalented,CordeiroGoncalvesHazratTalented,
HazratSebandalVilela,CordeiroGoncalvesHazratUniform}.

This led to two closely related forms of the Graded Classification
Conjecture. The pointed form predicts that an order-preserving
$\mathbb Z[x,x^{-1}]$-module isomorphism which carries
$[L_k(G_A)]$ to $[L_k(G_B)]$ is induced by a graded isomorphism
$L_k(G_A)\cong L_k(G_B)$. The unpointed, Morita-theoretic form predicts that for finite essential matrices $A$ and $B$ we have
\begin{equation*}
 A\sim_{\mathrm{SE}}B
 \quad\Longleftrightarrow\quad
 L_k(G_A)\sim_{\mathrm{grMor}}L_k(G_B).
\end{equation*}
In the broader formulation surveyed in
\cite[Conjecture~8.8.2]{CortinasHazratSurvey}, shift equivalence is also expected to be equivalent to gauge-equivariant strong Morita equivalence of the graph $C^*$-algebras and to equivalence of certain singularity categories (the latter connection goes back to Chen and Yang \cite{ChenYang}). The conjecture is attractive because it would give a computable, field-independent classification of a large class of graded algebras, would place the algebraic and analytic pictures on the same footing, and it would identify a familiar invariant from symbolic dynamics as a complete invariant for graded Morita equivalence.

There seems to be significant evidence in its favour when strongly connected components of the graph are simple cycles. Hazrat proved the
pointed conjecture for polycephaly graphs \cite{HazratClassification}. Ara and Pardo established a general weak form for finite sinkless and sourceless graphs, in which one is allowed to twist by a locally inner automorphism of the degree-zero algebra \cite{AraPardo}. On the Morita-theoretic side, the conjecture was verified in several restricted settings, including meteor graphs \cite{CordeiroGillaspyGoncalvesHazrat}, certain connected three-vertex graphs \cite{HazratPacheco}, and connected finite sinkless and sourceless graphs whose Leavitt path algebras have Gelfand--Kirillov dimension three \cite{DoHazratNam}. Ara, Do, and Nam obtained a further positive result for graphs with disjoint cycles containing exactly three cycles whose lengths are pairwise coprime \cite{AraDoNam}. The pointed graded-isomorphism conjecture for graphs with disjoint cycles was also addressed in a recent preprint of Va\v{s} \cite{VasDisjointCycles}. Arnone also proved that the pointed ordered graded Grothendieck module classifies Leavitt path algebras of finite primitive graphs up to graded homotopy equivalence \cite{ArnoneHomotopy}.

A second line of work addresses the lifting problem. Arnone \cite{ArnoneLifting} and Va\v{s} \cite{VasFull} proved, by different methods, that the graded Grothendieck functor is full in the relevant sense. That is, positive pointed module maps lift to graded homomorphisms. This is an important part of Hazrat's original functorial formulation, but it does not show that mutually inverse maps of graded $K$-groups can be lifted to mutually inverse algebra maps. Abrams, Ruiz, and Tomforde introduced bridging bimodules and proved that a module shift equivalence satisfying additional compatibility conditions induces a graded Morita equivalence \cite{ARTRecasting}. The second-named author together with Aguyar Brix, Hazrat, and Ruiz obtained a pointed refinement, where unitally aligned module shift equivalence induces a graded isomorphism \cite{BDHR}. Separately, the combinatorial compatible, aligned, and balanced shift-equivalence relations, defined using path bijections, were shown to coincide with strong shift equivalence for finite essential matrices  \cite{BilichDorOnRuizCorrespondences}. However, despite all of these developments, a central question still remained: does arbitrary shift equivalence already force graded Morita equivalence?

The main result of this paper shows, identifies the graded Morita equivalence of Leavitt path algebras with strong shift equivalence rather than with shift equivalence.

\begin{theoremintro}\label{thm:main}
Let $A$ and $B$ be finite essential adjacency matrices, and let $k$ be a field. The Leavitt path algebras $L_k(G_A)$ and $L_k(G_B)$ are graded Morita equivalent if and only if $A$ and $B$ are strong shift equivalent.
\end{theoremintro}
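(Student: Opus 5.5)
We treat the two implications separately. The easy direction is that strong shift equivalence implies graded Morita equivalence. It suffices to treat an elementary strong shift relation $A=RS$, $B=SR$. We form the bipartite matrix
\[
C=\begin{pmatrix}0&R\\ S&0\end{pmatrix},
\]
whose graph $G_C$ has vertex set $V_A\sqcup V_B$. Since $C^2=\operatorname{diag}(A,B)$, paths of length two in $G_C$ starting in $V_A$ correspond to edges of $G_A$, and similarly for $V_B$.

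Let $p_A=\sum_{v\in V_A}v$ and $p_B=\sum_{v\in V_B}v$ in $L_k(G_C)$. Both are homogeneous of degree $0$ and full, since $A,B$ are essential and so every vertex connects to both parts. We then identify the corner $p_AL_k(G_C)p_A$, with the grading halved, with $L_k(G_A)$ via $e\mapsto rs$ for the factorisation $e=rs$. Strictly, one should use the out-split/in-split style argument, or the bridging-bimodule construction of Abrams--Ruiz--Tomforde. The bimodule $p_AL_k(G_C)p_B$ then gives the graded Morita equivalence. The regrading issue can also be avoided by the balanced or compatible shift equivalence results of \cite{BilichDorOnRuizCorrespondences} combined with \cite{ARTRecasting}. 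Concretely, an elementary SSE yields a compatible shift equivalence of lag one, and by \cite{ARTRecasting} this induces a graded Morita equivalence.

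The hard direction is that graded Morita equivalence implies strong shift equivalence. A graded Morita equivalence is implemented by a graded invertible bimodule $P$, a finitely generated graded projective module over $L_k(G_A)$. The plan is to extract from $P$ combinatorial data of the type in \cite{BilichDorOnRuizCorrespondences}, namely path bijections realising a compatible (or aligned) shift equivalence. That paper shows these coincide with SSE.

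The key steps are as follows.
\begin{enumerate}[(1)]
\item Reduce the equivalence to the degree-zero parts: a graded Morita equivalence gives a Morita equivalence of the ultramatricial algebras $L_k(G_A)_0$ and $L_k(G_B)_0$. This equivalence intertwines the shift automorphisms given by tensoring with $L_k(G_A)_1$, a bimodule which is the Pimsner--Cuntz correspondence of the graph.
\item Represent $P_0$ and the relevant Hom spaces as inductive limits of finite-dimensional modules over the Bratteli diagram stages.
\item Use finite generation to show that the isomorphism $P\otimes L_1^B\cong L_1^A\otimes P$ is defined at a finite stage. This should produce matrices $R,S$ together with bimodule isomorphisms $R\otimes A$-correspondence $\cong$ $B$-correspondence $\otimes R$, and so on, which amount to an equivalence of $C^*$- or $k$-correspondences in the sense of Muhly--Pask--Tomforde / Kakariadis--Katsoulis.
\item Conclude that the correspondences $X_A$ and $X_B$ over $k^{V_A}$ and $k^{V_B}$ are strong shift equivalent as correspondences. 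Over a commutative semisimple coefficient algebra, correspondences are just matrices with bases, so this yields SSE of $A$ and $B$.
\end{enumerate}

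The main obstacle is step (3) together with the passage in step (4) from an equivalence at the level of $k$-bimodules to an honest SSE over $\mathbb Z_{\geq0}$. A Morita equivalence of degree-zero algebras only sees dimensions, that is, $K$-theory, unless we retain the actual vector-space isomorphisms. The point must be that, over a field, isomorphisms of free $k^{V}$-bimodules are arbitrary invertible matrices, not permutations, and one must show they can nevertheless be converted to path bijections. For this we would use that finite-dimensional bimodule isomorphisms over $k^V$ are determined up to choice of basis, so that dimension equalities of the form $RS=A^\ell$ hold together with \emph{compatible} coherence of the isomorphisms. Those coherence constraints are exactly the compatibility conditions in \cite{BilichDorOnRuizCorrespondences}, and checking that they are preserved under basis choices is where the field-independence must be verified.
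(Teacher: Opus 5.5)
Your easy direction is fine in outline; the paper simply cites Hazrat's construction for it. The hard direction, however, has a genuine gap, located exactly where you place the main obstacle.

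Step (1) is harmless, since by Dade's theorem that data is equivalent to the graded Morita equivalence itself. In step (3), though, you condense it into matrices $R,S$ and isomorphisms such as $X_A\otimes R\cong R\otimes X_B$ and $R\otimes S\cong X_A^{\otimes\ell}$ of bimodules over $k^{V_A}$ and $k^{V_B}$. As you note yourself, such isomorphisms are determined by dimension counts. What survives is therefore precisely $AR=RB$, $RS=A^\ell$, and so on. This is shift-equivalence data, the same information $\Kgr$ carries, and it exists for the Kim--Roush pairs, which are not strong shift equivalent. So step (4) cannot be derived from the output of step (3).

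A strong shift equivalence needs a chain of intermediate index sets and factorizations $A_r=R_rS_r$, $A_{r+1}=S_rR_r$, and nothing in your plan produces them. Appealing to the compatible or aligned relations of \cite{BilichDorOnRuizCorrespondences} does not close the gap. Those relations are formulated with path bijections, and converting arbitrary invertible $k$-linear intertwiners into such bijections is essentially the SE-versus-SSE problem itself, which you leave open. The $C^*$-analogue of your data exists for primitive shift equivalent pairs by \cite{BratteliKishimoto}. Any valid argument must therefore use the algebraic finiteness of $\Lk{G_B}_0$ in an essential way, not merely to locate isomorphisms at a finite stage. You also tacitly assume the equivalence is $k$-linear; the paper has to correct for field automorphisms via the degree-zero center.

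The missing idea is to build intermediate, generally non-semisimple, finite-dimensional algebras from the interaction of the two filtrations, and to read matrices off them by Jordan--H\"older multiplicities. The paper first makes a reduction using three ingredients: the homogeneous Morita theorem of Abrams--Ruiz--Tomforde, diagonalization of the full idempotent inside some $M_d(Q_t(G_B))$, and the row-repeated matrix $B^{\mathbf m}$, which satisfies $B^{\mathbf m}\ESSE B$. This reduces the problem to a graded $k$-algebra isomorphism $\varphi\colon\Lk{G_A}\to\Lk{G_{B^{\mathbf m}}}$.

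It then fixes one stage $C=Q_n$ of the target with $p_u\in C$, $x_e\in H_n^+$, $y_e\in H_n^-$. It puts $K_r=C\cap\varphi(Q_r(G_A))$, which interpolates between $K_0=P$ and $K_N=C$. The enlargement $\widehat K=\sum_{e,f}x_eKy_f$ satisfies $\Mmult{\widehat K}{\widehat L}=\Mmult{K}{L}$, $K_{r+1}=C\cap\widehat K_r$ and $\widehat C=Q_{n+1}$. Multiplicativity of multiplicity matrices along $K_r\subseteq K_{r+1}\subseteq\widehat K_r\subseteq\widehat K_{r+1}$ then gives the elementary steps, with endpoints $A$ and the essential core of the target matrix. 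The intermediate matrices are indexed by the simple modules of the $K_r$, and this is exactly the data your approach has no means of producing.
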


Thus, we see that graded Morita equivalence remembers the full conjugacy relation of the associated shifts of finite type, even though the diagonal subalgebra is not included in its definition. Equation \eqref{eq:Kgr-SE-intro} and Theorem~\ref{thm:main} show that the obstruction to classification by $K_0^{\mathrm{gr}}$ is the gap between shift equivalence and strong shift equivalence.

To witnes this gap, we appeal to the primitive Kim--Roush examples \cite{KimRoush}, which then yield a counterexample to the Graded Morita equivalence conjecture. Their adjacency matrices are shift equivalent, so the corresponding ordered graded Grothendieck groups are isomorphic over every field, but they are not strong shift equivalent, so that the associated Leavitt path algebras are not graded Morita equivalent. 

\begin{corollaryintro}
There are primitive matrices $A$ and $B$ such that, over every field $k$, the ordered $\mathbb Z[x,x^{-1}]$-modules $\Kgr(\Lk{G_A})$ and $\Kgr(\Lk{G_B})$ are isomorphic, but $\Lk{G_A}$ and $\Lk{G_B}$ are not graded Morita equivalent.
\end{corollaryintro}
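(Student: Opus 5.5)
The corollary follows by combining Theorem~\ref{thm:main} with equation~\eqref{eq:Kgr-SE-intro} and the primitive Kim--Roush counterexample \cite{KimRoush}. Kim and Roush construct primitive matrices $A$ and $B$ over $\Zp$ that are shift equivalent but not strong shift equivalent. Primitive matrices are essential: a zero row or column would give a zero row or column in every power, so no power could be strictly positive. Hence both $A$ and $B$ satisfy the hypotheses of Theorem~\ref{thm:main} and of \eqref{eq:Kgr-SE-intro}.

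Fix a field $k$. Since $A\SE B$, equation~\eqref{eq:Kgr-SE-intro} (from \cite{HazratDynamics,HazratClassification}) gives an isomorphism of ordered $\mathbb Z[x,x^{-1}]$-modules $\Kgr(\Lk{G_A})\cong\Kgr(\Lk{G_B})$. I would point out that this step is field-independent. The identification of $\Kgr(\Lk{G_A})$ with Krieger's dimension module, namely the direct limit of $\mathbb Z^{V_A}$ under the transpose of $A$ with its positive cone, uses only the graph presentation of the talented monoid. So the same isomorphism works over every field.

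On the other side, $A$ and $B$ are not strong shift equivalent. The ``only if'' direction of Theorem~\ref{thm:main} therefore shows that $\Lk{G_A}$ and $\Lk{G_B}$ are not graded Morita equivalent, and this holds for every field $k$.

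There is no real obstacle here, since all the work is in Theorem~\ref{thm:main}. The only points that need care are bookkeeping ones. First, the convention in \eqref{eq:Kgr-SE-intro} may involve transposes. This does not matter: $A\SE B$ if and only if $A^{\transpose}\SE B^{\transpose}$, and likewise for strong shift equivalence, because transposing $A=RS$, $B=SR$ gives $A^{\transpose}=S^{\transpose}R^{\transpose}$, $B^{\transpose}=R^{\transpose}S^{\transpose}$. Second, we need that the Kim--Roush matrices are genuinely primitive, hence essential, which is how they are stated in \cite{KimRoush}.
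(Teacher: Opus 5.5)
Your proposal is correct and matches the paper's proof: the paper also takes the primitive (hence essential) Kim--Roush matrices, obtains the ordered $\mathbb Z[x,x^{-1}]$-module isomorphism of graded $K_0$ over every field from shift equivalence, and then uses the main theorem to rule out graded Morita equivalence, since $A$ and $B$ are not strong shift equivalent. The only cosmetic difference is that the paper writes the $K$-theory isomorphism explicitly as $\widehat R([z,n])=[R^\transpose z,n]$ from $\mathcal D_A$ to $\mathcal D_B$, instead of quoting \eqref{eq:Kgr-SE-intro}.
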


By the work of Bratteli and Kishimoto \cite{BratteliKishimoto}, we know that for primitive adjacecy matrices the corresponding Cuntz--Krieger C*-algebras are nevertheless stably gauge-equivariantly isomorphic. Hence the algebraic and analytic versions of the proposed classification diverge already for primitive matrices, and we see that the process of completion of the Leavitt path algebras into graph C*-algebras loses information about the underlying adjacency matrices.

The failure of the graded Morita equivalence conjecture is not repaired by recording the order unit. Using successive in-splittings with nonempty incoming parts, we realize the image of the unit in the dimension-group isomorphism by another primitive matrix. This produces a pair of primitive matrices for which there is a unit-preserving ordered $\mathbb Z[x,x^{-1}]$-module isomorphism of graded $K_0$ groups, while the two algebras are not graded isomorphic. Thus, the pointed graded-isomorphism conjecture fails over every field as well.

\begin{corollaryintro}
There are finite primitive adjacency matrices $A,C$ such that, for every field $k$, there is a unit-preserving ordered $\mathbb Z[x,x^{-1}]$-module isomorphism $\Kgr(\Lk{G_A})\cong\Kgr(\Lk{G_C})$, but $\Lk{G_A}$ and $\Lk{G_C}$ are not graded isomorphic as graded rings.
\end{corollaryintro}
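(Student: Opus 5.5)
The plan is to start from the primitive Kim--Roush pair $A,B$ of \cite{KimRoush}: $A\SE B$ but $A$ and $B$ are not strong shift equivalent. By \eqref{eq:Kgr-SE-intro} there is an ordered $\mathbb Z[x,x^{-1}]$-module isomorphism $\varphi\colon \Kgr(\Lk{G_A})\to \Kgr(\Lk{G_B})$. Set $u=\varphi([\Lk{G_A}])$, an order unit of $\Kgr(\Lk{G_B})$. I will then build a primitive matrix $C$, obtained from $B$ by successive in-splittings with nonempty parts, together with an ordered module isomorphism $\psi\colon\Kgr(\Lk{G_B})\to\Kgr(\Lk{G_C})$ satisfying $\psi(u)=[\Lk{G_C}]$. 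Then $\psi\circ\varphi$ is unit preserving.

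Suppose $\Lk{G_A}\cong\Lk{G_C}$ as graded rings. A graded isomorphism is in particular a graded Morita equivalence, so Theorem~\ref{thm:main} gives $A\SSE C$. Since $C$ comes from $B$ by in-splittings, $C\SSE B$, and hence $A\SSE B$, which contradicts Kim--Roush. Nothing in this argument depends on $k$.

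The work is in constructing $C$, and I would do it in four steps.

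\emph{Step 1 (normalize $u$).} Multiplication by $x^{N}$ is an automorphism of the ordered module. Composing $\varphi$ with it, I may replace $u$ by $x^{N}u$ for any $N\in\mathbb Z$.

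\emph{Step 2 (put $u$ in a convenient form).} The positive cone is generated by the classes $x^i[v]$ for $v\in V_B$, subject to the relation $[v]=\sum_{w}B_{vw}\,x^{-1}[w]$ (with the sign of the shift fixed by the paper's convention). Using this relation repeatedly, together with primitivity of $B$, I expect to rewrite a suitable shift of $u$ as $\sum_v c_v [v]$ with a single shift and all $c_v\geq 1$. Primitivity means that after expanding far enough, every vertex appears with positive coefficient, and Step 1 absorbs the overall shift.

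\emph{Step 3 (in-splitting changes the regular class).} An in-splitting of $v$ into $m$ vertices, with the incoming edges partitioned into $m$ nonempty parts, is an elementary strong shift equivalence. It therefore induces a canonical ordered module isomorphism of $\Kgr$, under which each new vertex $v_j$ maps to $[v]$ (the out-edges are duplicated). So the regular class $\sum_v[v]$ is carried to $\sum_v[v]+(m-1)[v]$. Iterating, I want to achieve any coefficient vector $(c_v)$ with $c_v\geq 1$. For this I need enough incoming edges at each vertex to form nonempty parts. I plan to obtain them by first passing to a higher block presentation, or to $B^n$-type in-split graphs, which are still strong shift equivalent to $B$ and have many incoming edges per vertex.

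\emph{Step 4 (primitivity of $C$).} In-splittings with nonempty parts keep the matrix essential. They also preserve the topological conjugacy class of the edge shift, and hence mixing, so $C$ is primitive.

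I expect Steps 2 and 3 to be the main obstacle: showing that an arbitrary order unit can be realized as the regular class after such splittings, with the bookkeeping of the $x$-shifts handled correctly. My first approach would be to choose $N$ large, expand $x^{N}u$ into a positive combination of unshifted vertex classes with large coefficients, and then in-split one vertex at a time, increasing its multiplicity by one at each stage.
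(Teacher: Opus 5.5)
Your outer argument matches the paper's: transport the unit of $A$ into $\Kgr(\Lk{G_B})$, realize it as the unit of a graph $C$ obtained from $B$ by in-splittings, and conclude $A\SSE C\SSE B$ from Theorem~\ref{thm:main}. Steps 1, 2 and 4 are fine. The gap is Step 3, which is the whole content of the paper's Lemma~\ref{lem:unit-realization}, and as you set it up it fails.

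First, not every $c\geq\one$ is reachable. In-splitting preserves the property that each copy of $u$ emits exactly $B_{uv}$ edges into copies of $v$. So the $c_v$ copies of $v$ receive $(B^\transpose c)_v$ edges in total, and since each copy needs at least one, every reachable count vector satisfies $B^\transpose c\geq c$. The target must therefore be chosen with this property. In your setup you can arrange it: write the shifted unit as $[(B^\transpose)^Nz,0]$ with $0\neq z\geq 0$; Perron--Frobenius gives $(B^\transpose)^N(B^\transpose-I)z>0$ for large $N$, because the Perron eigenvalue of $B$ exceeds $1$. The paper instead takes $m=R^\transpose\one$ and gets $B^\transpose m=R^\transpose A^\transpose\one\geq m$ from $AR=RB$.

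Second, your remedy for ``enough incoming edges'' does not work. In a higher block presentation the vertex $e_1\cdots e_{n-1}$ has the same in-degree as $s(e_1)$ has in $G_B$, so in-degrees do not grow. The graph $G_{B^n}$ is not strong shift equivalent to $B$, since its entropy is $n$ times larger. And any preliminary change of presentation moves the unit class (the complete in-splitting sends $[\one,0]$ to $x[\one,0]$), so the bookkeeping would have to be redone.

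What you actually need is an ordering argument showing that greedy splitting never gets stuck. Suppose $B^\transpose m\geq m$ and the current count vector satisfies $c\leq m$, $c\neq m$. Then some $v$ has $c_v<m_v$ and $(B^\transpose c)_v>c_v$, so some copy of $v$ receives at least two edges and can be split into two nonempty parts. The order genuinely matters: if $v$ has a single incoming edge, from $w$, then $v$ can only be split once $c_w>c_v$. So ``increase one vertex at a time'' is not automatic.

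The paper proves the claim by contradiction. Let $J=\{v:c_v<m_v\}$ and let $S\subseteq J$ be the set where $c$ attains its minimum on $J$. If the claim fails, every vertex of $S$ has exactly one incoming edge, whose source lies in $S$. Irreducibility then forces $S=V_B$ and $G_B$ to be a cycle, contradicting that $B$ is not a permutation matrix. With a target $m$ satisfying $B^\transpose m\geq m$ and this claim, your Steps 1--4 close up.
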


Since strong shift equivalence coincides with graded Morita equivalence in this class, for finite essential matrices we see that the graded Morita equivalence problem for Leavitt path algebras contains exactly the unresolved decidability problem for shifts of finite type. In particular, graded Morita equivalence does not reduced to the decidable dimension-group invariant.

Our proof is inspired by previous works where strong shift equivalence was recovered from appropriately defined equivalence relations on graph C*-algebras and C*-correspondences associated to adjacency matrices \cite{CarlsenDorOnEilers, BilichDorOnRuizCorrespondences}. 

Let $C\subseteq D$ be finite-dimensional unital $k$-algebras. Every finite-dimensional right module has a composition series, and the Jordan--H\"older theorem makes the multiplicity $[M:S]$ of a simple module $S$ among the composition factors of $M$ independent of the chosen series. Thus, after ordering the simple right modules
$S_1,\ldots,S_r$ of $C$ and $T_1,\ldots,T_s$ of $D$, the inclusion recovers a well-defined nonnegative integer matrix
\[
 \mathsf M(C,D)_{ij}
   =[\operatorname{Res}_C^D T_j:S_i].
\]
The restricted module need not be semisimple, which makes the Jordan--H\"older multiplicity an appropriate notion for our purposes. Since restriction of scalars is exact, these matrices multiply along chains in the sense that if $C\subseteq D\subseteq H$ is a finite dimensional algebra inclusion, then
\[
 \mathsf M(C,H)=\mathsf M(C,D)\mathsf M(D,H).
\]
For the consecutive algebras $Q_n(G_A)\subseteq Q_{n+1}(G_A)$ whose direct limit is $\Lk{G_A}_0$, the simple blocks are indexed by terminal vertices, and the resulting restriction matrix is precisely $A$ in the sense that its $(v,w)$ entry counts the possible last edges from $v$ to $w$. The proof constructs interlacing chains of finite-dimensional algebras for
which this multiplicativity gives factorizations $A_r=R_rS_r$ and
$A_{r+1}=S_rR_r$. In this way, non-negative integer matrices implementing strong shift equivalence arise from inclusions.

Suppose now that $A$ and $B$ are essential and $\varphi:L_k(G_A)\longrightarrow L_k(G_B)$ is a graded $k$-algebra isomorphism.  Write
$p_u=\varphi(u)$, $x_e=\varphi(e)$, and $y_e=\varphi(e^*)$.  If $K$ is a finite-dimensional subalgebra of $L_k(G_B)_0$ containing the $p_u$, put $\widehat K=\sum_{e,f\in G_A^1}x_eKy_f$. The main observation is that this enlargement has an internal reconstruction in terms of coefficients in the sense that
\[
 y_ebx_f\in K,
 \ \ \text{and} \ \  
 b=\sum_{e,f\in G_A^1}x_e(y_ebx_f)y_f.
\]
Intersecting one fixed $Q_n(G_B)$ algebra of $G_B$ with the successive $Q_{n+1}(G_A)$ algebras coming from $G_A$ then produces a finite chain
\[
 K_r\subseteq K_{r+1}\subseteq\widehat K_r
       \subseteq\widehat K_{r+1}.
\]
Restriction multiplicities along this chain then yield factorizations
$A_r=R_rS_r$ and $A_{r+1}=S_rR_r$ whose endpoint matrices are precisely $A$ and $B$ Thus, the argument gives strong shift equivalence, so that a graded $k$-algebra
isomorphism $L_k(G_A)\cong_{\mathrm{gr}}L_k(G_B)$ implies
$A\sim_{\mathrm{SSE}}B$.

The reduction from graded Morita equivalence to the aforementioned result is achieved by applying some technical results. The homogeneous graded Morita theorem of Abrams--Ruiz--Tomforde \cite{ARTMorita} gives a graded ring isomorphism $L_k(G_A)\cong_{\mathrm{gr}}eM_d(L_k(G_B))e$ where $e$ is full in $M_d(L_k(G_B)_0)$. We then place $e$ in one finite-dimensional $Q_n(G_B)$ algebra and diagonalize it there. The same diagonalization is used first to normalize the action on the coefficient field and then, by conjugating the resulting path idempotents with the paths themselves, to replace the corner by the Leavitt path algebra of a certain graph with "row-repeated" adjacency matrix $B^{\mathbf m}$. Its adjacency matrix is elementary strong shift related to $B$. The associated graph has no sinks, although it may have sources, but a short extension of the result which allows the second adjacency matrix to include sources then establishes a proof of the main result.

The paper is organized as follows. Section~\ref{sec:prelim} recalls the passage from matrices to graphs, the basic graded structure of Leavitt path algebras, composition series and Jordan--H\"older multiplicities, and the extraction of adjacency matrices from the $Q_n(G_B)$ filtration. In Section~\ref{sec:isomorphism} we prove that the existence of a graded $k$-algebra isomorphism $L_k(G_A)\cong_{\mathrm{gr}}L_k(G_B)$ implies $A\sim_{\mathrm{SSE}}B$ by the finite-chain argument just described. Section~\ref{sec:technical} treats source deletion, full degree-zero corners, scalar normalization, and the reduction from graded Morita equivalence to graded isomorphism.  Finally, Section~\ref{sec:conjectures} applies the main theorem to the original Kim--Roush matrices and provides the pointed counterexample by in-splittings which preserve primitivity of the adjacency matrices.

\vspace{6pt}

\textbf{AI disclosure statement.} GPT-6 was used to explore proof strategies for this paper. More precisely, it was used to upgrade a solution of \cite[Question 1]{ARTRecasting} obtained by the authors to a strategy for constructing a strong shift equivalence from a graded isomorphism of Leavitt path algebras. The key observation was to construct a specific sequence of inclusions of finite dimensional subalgebras of the zero-graded components of a Leavitt path algebra, some of which are not semisimple, and to use the Jordan--H\"older multiplicity matrices of inclusions to recover matrices implementing a strong shift equivalence sequence for the underlying essential adjacency matrices $A$ and $B$. The authors subsequently checked and wrote the arguments in their final form and take full responsibility for all claims and proofs in the paper.

\section{Preliminaries} \label{sec:prelim}

We collect here the notation and the finite-dimensional algebra preliminaries used in the proof. Standard references for Leavitt path
algebras are \cite{AbramsArandaPino,AAMS}. For composition series and the Jordan--H\"older theorem we refer, for instance, to \cite{LamFirstCourse}.

Let $V$ and $W$ be finite sets, and let $R=[R_{vw}]_{v\in V,w\in W}$ be a matrix over $\mathbb Z_{\geq 0}$. We write
\[
 E_R=\{(v,i,w):v\in V,\ w\in W,\ 0\leq i < R_{vw}\},
\]
with source and range maps $s(v,i,w)=v$ and $r(v,i,w)=w$. When $V=W$, this gives a directed graph $G_R=(V,E_R,s,r)$. Thus there are exactly $R_{vw}$ edges from $v$ to $w$. Throughout the
paper, rows are indexed by sources and columns by ranges.

A square nonnegative integer matrix will be referred to as an \emph{adjacency matrix}. An adjacency matrix is called \emph{essential} if it has no zero rows and no zero columns. Equivalently, an adjacency matrix $A$ is essential if and only if the graph $G_A$ has no sinks and no sources. For adjacency matrices $A$ and $B$, we write $A\ESSE B$ if there are rectangular nonnegative integer matrices $R,S$ such that $A=RS$ and $B=SR$. Strong shift equivalence, denoted $A\SSE B$, is the equivalence relation
generated by elementary strong shift equivalence. We write $A\SE B$ if there are an integer lag $\ell\geq 1$ and rectangular nonnegative integer matrices $R,S$ satisfying
\[
 AR=RB,\ \ \ \ BS=SA,\ \ \ \ RS=A^\ell,\ \ \ \ SR=B^\ell.
\]

Let $G=(G^0,G^1,s,r)$ be a finite directed graph and let $k$ be a field. The Leavitt path algebra $L_k(G)$ is the universal $k$-algebra generated by pairwise orthogonal idempotents $v$, $v\in G^0$, together with edges $e$ and ghost edges $e^*$, $e\in G^1$, subject to
\begin{align*}
 s(e)e=e=er(e),& \ \ \ \ r(e)e^*=e^*=e^*s(e),\\
 e^*f=\delta_{e,f}r(e), & \ \ \ \ 
 v=\sum_{s(e)=v}ee^*,\  \text{with} \ s^{-1}(v) \neq \emptyset.
\end{align*}
Since $G$ is finite, $L_k(G)$ is unital with
$1=\sum_{v\in G^0}v$. For a path $\alpha=e_1\cdots e_n$, we put $|\alpha|=n$ and $\alpha^*=e_n^*\cdots e_1^*$. Vertices are regarded as paths of length zero. The path normal form says that $L_k(G)$ is spanned by the elements $\alpha\beta^*$ with $r(\alpha)=r(\beta)$.

The standard $\mathbb Z$-grading is determined by requiring that
\[
 \deg(v)=0,\qquad \deg(e)=1,\qquad \deg(e^*)=-1.
\]
For a graded ring $R=\bigoplus_{n\in\mathbb Z}R_n$, let $\operatorname{Gr}\text{-}R$ denote the category of graded right
$R$-modules and degree-zero homomorphisms. Two graded rings $R$ and $S$ are \emph{graded Morita equivalent} if there is a categorical equivalence $F: \operatorname{Gr}\text{-}R \longrightarrow \operatorname{Gr}\text{-}S$ such that $F(M(n)) = F(M)(n)$. Recall that $R$ is \emph{strongly graded} if $R$ is \emph{strongly graded} if $R_nR_m=R_{n+m}$ for every $m,n\in\mathbb Z$. If $G$ is finite and has no sinks, then $L_k(G)$ is strongly graded (see \cite[Chapter~3]{AAMS}).

Let $C$ be a finite-dimensional $k$-algebra and let $M$ be a
finite-dimensional right $C$-module. A \emph{composition series} for $M$ is a finite chain of submodules 
\begin{equation*}
 0=M_0\subsetneq M_1\subsetneq\cdots\subsetneq M_\ell=M
\end{equation*}
for which each quotient $M_i/M_{i-1}$ is simple. The quotients are the \emph{composition factors} of the series. Every finite-dimensional module has a composition series: one may choose a maximal proper submodule and argue by induction on $\dim_k M$.

The Jordan--H\"older theorem says that any two composition series of $M$ have the same length and the same simple factors, counted with multiplicity, up to a permutation. Consequently, if $S$ is a simple right $C$-module, the integer $[M:S]$ defined as the number of composition factors isomorphic to $S$ is independent of the chosen series. We call it the \emph{Jordan--H\"older
multiplicity} of $S$ in $M$. It is additive on short exact sequences. That is, if
\[
 0\longrightarrow M'\longrightarrow M\longrightarrow M''
 \longrightarrow 0
\]
is exact, then we have additivity
\begin{equation*}
 [M:S]=[M':S]+[M'':S].
\end{equation*}
Indeed, a composition series for $M'$ together with the inverse images of a composition series for $M''$ gives a composition series for $M$.

Now let $C\subseteq D$ be finite-dimensional unital $k$-algebras. If $T$ is a finite-dimensional right $D$-module, then $\operatorname{Res}_C^D T$ is finite-dimensional over $k$, and hence has finite length as a $C$-module. The Jordan--H\"older multiplicities make sense even without any semisimplicity assumption.

Choose ordered complete sets $S_1,\ldots,S_r$ and $T_1,\ldots,T_s$ of representatives for the simple right modules over $C$ and $D$,
respectively. The multiplicity matrix of the
inclusion $C\subseteq D$ is
\begin{equation*}
 \Mmult{C}{D}_{ij}
   =\big[\operatorname{Res}_C^D T_j:S_i\big]
 \in\mathbb Z_{\geq 0}.
\end{equation*}
Changing the order of the chosen simple modules merely permutes the rows or columns of this matrix. When $C$ is split semisimple, restriction is semisimple and the multiplicity matrix has the more familiar meaning
\[
 \operatorname{Res}_C^D T_j
   \cong\bigoplus_{i=1}^r S_i^{\,\Mmult{C}{D}_{ij}}.
\]
In particular, if $C\cong\bigoplus_{i=1}^rM_{c_i}(k)$ and $D\cong\bigoplus_{j=1}^sM_{d_j}(k)$, then $\Mmult{C}{D}$ is the usual inclusion matrix, or Bratteli matrix,
of $C\subseteq D$ (see \cite{BratteliAF}). The $j$-th column records how the simple module of the $j$-th block of $D$ decomposes on restriction to $C$, and one has $d_j=\sum_{i=1}^r c_i\Mmult{C}{D}_{ij}$. We will use the following basic multiplicativity property.

\begin{lem} \label{lem:multiplicity-product}
Let $C\subseteq D\subseteq H$ be finite-dimensional unital
$k$-algebras. After fixing ordered representatives for the simple right modules of the three algebras, we have that
\begin{equation*}
 \Mmult{C}{H}=\Mmult{C}{D}\Mmult{D}{H}.
\end{equation*}
\end{lem}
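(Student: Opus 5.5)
The proof is a direct computation of the entries of $\Mmult{C}{H}$. Its only inputs are that restriction of scalars is exact and that Jordan--H\"older multiplicity is additive on short exact sequences. Denote the chosen simple representatives by $S_1,\ldots,S_r$ for $C$, $T_1,\ldots,T_s$ for $D$, and $U_1,\ldots,U_t$ for $H$. We need to show
\[
 [\operatorname{Res}^H_C U_l:S_i]=\sum_{j=1}^s [\operatorname{Res}^D_C T_j:S_i]\,[\operatorname{Res}^H_D U_l:T_j]
\]
for all $i,l$.

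\emph{Step 1: a filtration over $D$.} Fix $l$. The module $\operatorname{Res}^H_D U_l$ is finite-dimensional, so it has a composition series
\[
 0=N_0\subsetneq N_1\subsetneq\cdots\subsetneq N_m=\operatorname{Res}^H_D U_l
\]
of $D$-modules. By the Jordan--H\"older theorem, exactly $[\operatorname{Res}^H_D U_l:T_j]$ of the quotients $N_q/N_{q-1}$ are isomorphic to $T_j$.

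\emph{Step 2: restrict to $C$.} Restriction from $D$ to $C$ is exact, and $\operatorname{Res}^D_C\operatorname{Res}^H_D=\operatorname{Res}^H_C$ because the inclusions are unital and compatible. So the same chain, viewed as $C$-modules, is a filtration of $\operatorname{Res}^H_C U_l$ with successive quotients $\operatorname{Res}^D_C(N_q/N_{q-1})$. These quotients are not simple in general, and need not be semisimple.

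\emph{Step 3: additivity.} Apply additivity of $[-:S_i]$ inductively to the short exact sequences $0\to N_{q-1}\to N_q\to N_q/N_{q-1}\to 0$ of $C$-modules. This gives
\[
 [\operatorname{Res}^H_C U_l:S_i]=\sum_{q=1}^m[\operatorname{Res}^D_C(N_q/N_{q-1}):S_i].
\]
Isomorphic $D$-modules have isomorphic restrictions, so each summand equals $[\operatorname{Res}^D_C T_j:S_i]$ for the $j$ with $N_q/N_{q-1}\cong T_j$. Grouping the terms by $j$ gives the displayed identity.

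The argument has no genuine obstacle, since no semisimplicity is needed anywhere. The one point to state explicitly is Step 3: additivity is applied repeatedly along a filtration whose factors are not simple. This amounts to concatenating composition series of the factors, exactly as in the additivity remark above.
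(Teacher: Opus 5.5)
Your proposal is correct and follows essentially the same route as the paper, including the index convention $\Mmult{C}{D}_{ij}=[\operatorname{Res}_C^D T_j:S_i]$. The paper expresses $[\operatorname{Res}_D^H U_l]$ in the Grothendieck group of finite-length $D$-modules and then uses exactness of restriction and additivity. You do the same thing concretely, by restricting a $D$-composition series of $\operatorname{Res}_D^H U_l$ to $C$ and summing multiplicities along that filtration.
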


\begin{proof} Let $S_1,\ldots,S_r$ and $T_1,\ldots,T_s$ and $U_1,...,U_{\ell}$ be representatives for the simple right modules over $C$ and $D$ and $H$ respectively. As a $D$-module, the class of $U_k$ is
\[
 [\operatorname{Res}_D^H U_k]
   =\sum_j[\operatorname{Res}_D^H U_k:T_j][T_j]
\]
in the Grothendieck group of finite-length $D$-modules. Restriction of scalars is exact, so after restricting to $C$ and using additivity, we have that
\[
 [\operatorname{Res}_C^H U_k : S_i] = \sum_j
 [\operatorname{Res}_D^H U_k:T_j] [\operatorname{Res}_C^D T_j:S_i].
\]
Thus, we have obtained the desired equation in the lemma.
\end{proof}

The following elementary consequence will produce strong shift equivalences later on.

\begin{lem} \label{lem:interlacing-inclusions}
Suppose $C_0\subseteq C_1\subseteq D_0\subseteq D_1$ are finite-dimensional unital $k$-algebras. Assume that the simple
modules of $D_i$ have been labelled by those of $C_i$, for $i=0,1$, and that under these labels we have $\Mmult{D_0}{D_1}=\Mmult{C_0}{C_1}$. Denote
\[
 A_i=\Mmult{C_i}{D_i},\qquad
 R=\Mmult{C_0}{C_1},\qquad
 S=\Mmult{C_1}{D_0}.
\]
Then $A_0=RS$ and $A_1=SR$, so that $A_0\ESSE A_1$.
\end{lem}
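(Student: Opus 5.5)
This follows directly from Lemma~\ref{lem:multiplicity-product}, applied to two three-term chains extracted from $C_0\subseteq C_1\subseteq D_0\subseteq D_1$. The only point needing care is bookkeeping of the labels of simple modules, so that the product matrices are indexed consistently.

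Fix ordered representatives for the simple right modules of $C_0$ and $C_1$. Use the given labelling to transport these orderings to $D_0$ and $D_1$; the hypothesis says the simple modules of $D_i$ are labelled by those of $C_i$, so $D_i$ and $C_i$ have the same index set. With these orderings, $A_i=\Mmult{C_i}{D_i}$ is square, $R=\Mmult{C_0}{C_1}$ has rows indexed by simples of $C_0$ and columns by simples of $C_1$, and $S=\Mmult{C_1}{D_0}$ has rows indexed by simples of $C_1$ and columns by simples of $D_0$, hence by simples of $C_0$. So $RS$ and $SR$ are square of the right sizes.

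First apply Lemma~\ref{lem:multiplicity-product} to the chain $C_0\subseteq C_1\subseteq D_0$. This gives
\[
A_0=\Mmult{C_0}{D_0}=\Mmult{C_0}{C_1}\Mmult{C_1}{D_0}=RS.
\]
Next apply it to the chain $C_1\subseteq D_0\subseteq D_1$. This gives
\[
A_1=\Mmult{C_1}{D_1}=\Mmult{C_1}{D_0}\Mmult{D_0}{D_1}.
\]
By hypothesis $\Mmult{D_0}{D_1}=\Mmult{C_0}{C_1}=R$ under the chosen labels, so $A_1=SR$.

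Both $R$ and $S$ have entries in $\mathbb Z_{\geq 0}$, being Jordan--H\"older multiplicities. Hence $A_0\ESSE A_1$ by definition.

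There is no real obstacle here. The one thing to verify is that each application of Lemma~\ref{lem:multiplicity-product} uses the same fixed orderings of the simple modules of each algebra throughout. This holds because we fix one ordering for each of $C_0,C_1,D_0,D_1$ at the start and never change it.
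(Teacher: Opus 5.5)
Your proposal is correct and follows essentially the same route as the paper: you apply Lemma~\ref{lem:multiplicity-product} to $C_0\subseteq C_1\subseteq D_0$ to get $A_0=RS$, and to $C_1\subseteq D_0\subseteq D_1$ together with the hypothesis $\Mmult{D_0}{D_1}=R$ to get $A_1=SR$. Your explicit check that the row and column index sets match under the transported labels is a correct, slightly more careful rendering of the same argument.
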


\begin{proof}
Apply Lemma~\ref{lem:multiplicity-product} first to
$C_0\subseteq C_1\subseteq D_0$ and then to
$C_1\subseteq D_0\subseteq D_1$. The first chain gives $A_0=RS$; the second gives $A_1=SR$ by the multiplicativity property and the assumption that $\Mmult{D_0}{D_1}=\Mmult{C_0}{C_1}$.
\end{proof}

Let $A$ be a finite essential adjacency matrix, and put $G=G_A$. For $n\geq 0$ define
\begin{align*}
 Q_n(G)&=\spn\{\alpha\beta^*:|\alpha|=|\beta|=n,
                         \ r(\alpha)=r(\beta)\}\subseteq \Lk{G}_0,\\
 H_n^+(G)&=\spn\{\alpha\beta^*:|\alpha|=n+1,\ |\beta|=n,
                         \ r(\alpha)=r(\beta)\}\subseteq \Lk{G}_1,\\
 H_n^-(G)&=\spn\{\alpha\beta^*:|\alpha|=n,\ |\beta|=n+1,
                         \ r(\alpha)=r(\beta)\}\subseteq \Lk{G}_{-1}.
\end{align*}
We omit $G$ from the notation when no confusion can arise. For
$v\in G^0$, let $G^nv$ be the set of paths of length $n$ ending at $v$, and put
\[
 z_v^{(n)}=\sum_{\alpha\in G^nv}\alpha\alpha^*.
\]
Since $G$ has no sources, $G^nv$ is nonempty for every $v$ and $n \in \mathbb{Z}_{\geq 0}$.

\begin{lem}\label{lem:path-pair}
Let $A$ be a finite essential adjacency matrix.
\begin{enumerate}
\item The algebras $Q_n(G)$ are unital finite-dimensional, split semisimple, $Q_n(G)\subseteq Q_{n+1}(G)$ and
\[
 \Lk{G}_0=\bigcup_{n\geq 0}Q_n(G), \ \ \ \Lk{G}_1=\bigcup_{n\geq 0}H_n^+(G), \ \ \ \Lk{G}_{-1}=\bigcup_{n\geq 0}H_n^-(G).
\]
\item For every $n\geq 0$ we have,
\begin{align*}
 H_n^-Q_nH_n^+&\subseteq Q_n,&
 H_n^+Q_nH_n^-&\subseteq Q_{n+1},&
 H_n^-Q_{n+1}H_n^+&\subseteq Q_n.
\end{align*}
\item There is a canonical block decomposition
\begin{equation}\label{eq:path-pair-blocks}
 Q_n(G)\cong\bigoplus_{v\in G^0}M_{|G^nv|}(k),
\end{equation}
in which $z_v^{(n)}$ is the identity of the $v$-block. Moreover, if
$h\in H_n^+(G)$, or $h'\in H_n^-(G)$,
\begin{equation}\label{eq:path-block-intertwining}
 h z_v^{(n)}=z_v^{(n+1)}h, \ \ \ h'z_v^{(n+1)} = z_v^{(n)}h'.
\end{equation}
\end{enumerate}
\end{lem}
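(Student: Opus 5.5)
The plan is to work throughout with the path normal form and the Cuntz--Krieger relations, and to establish matrix units first, since parts (1) and (3) both follow from them.

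\textbf{Matrix units.} For paths $\alpha,\beta,\gamma,\delta$ of the same length $n$, the relation $e^*f=\delta_{e,f}r(e)$ applied letter by letter gives $\beta^*\gamma=\delta_{\beta,\gamma}r(\beta)$. Hence
\[
 (\alpha\beta^*)(\gamma\delta^*)=\delta_{\beta,\gamma}\,\alpha\delta^*.
\]
So for fixed $v$, the elements $\alpha\beta^*$ with $\alpha,\beta\in G^nv$ behave like matrix units $E_{\alpha\beta}$. Products across different terminal vertices vanish, because $r(\beta)\neq r(\gamma)$ forces $\beta\neq\gamma$. This yields a surjection $\bigoplus_v M_{|G^nv|}(k)\to Q_n(G)$ that is an algebra map.

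Injectivity needs no basis theorem. Each block $M_{|G^nv|}(k)$ is simple, so the kernel is a sum of blocks. A block is killed only if its matrix units vanish, i.e.\ only if $v=\alpha^*(\alpha\alpha^*)\alpha$ vanishes. But vertices are nonzero in $L_k(G)$ by the standard representation.

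Within this identification, $z_v^{(n)}$ is the block identity. We also get $1=\sum_v z_v^{(n)}$: by induction on $n$, expanding $r(\alpha)=\sum_{s(e)=r(\alpha)}ee^*$ (there are no sinks) gives $\sum_{|\alpha|=n}\alpha\alpha^*=1$. The same expansion, $\alpha\beta^*=\sum_{s(e)=r(\alpha)}\alpha e e^*\beta^*$, shows $Q_n\subseteq Q_{n+1}$. Both algebras therefore share the unit, and $Q_n(G)$ is unital, finite-dimensional and split semisimple.

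\textbf{The union statements.} A homogeneous normal-form element $\alpha\beta^*$ of degree $0$ has $|\alpha|=|\beta|$, so it lies in $Q_{|\alpha|}$. One of degree $1$ has $|\alpha|=|\beta|+1$, so it lies in $H^+_{|\beta|}$; similarly for degree $-1$.

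To make the unions exhaust each degree, I also need $H_n^\pm\subseteq H_{n+1}^\pm$. This follows from the same expansion at $r(\alpha)$. The grading is a direct sum, so each homogeneous component is spanned by the homogeneous normal forms of that degree. The unions are increasing by the inclusions above, which gives (1).

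\textbf{Part (2).} This is a length count on products of monomials, after which the normal-form reduction is applied. Take $\beta'\alpha'^*\in H_n^-$ (with $|\beta'|=n$, $|\alpha'|=n+1$), $\gamma\delta^*\in Q_n$, and $\mu\nu^*\in H_n^+$. In $\alpha'^*\gamma$ the shorter path $\gamma$ must be an initial segment of $\alpha'$; otherwise the product is $0$. When it is, the product equals $e^*$ for the remaining edge $e$. Then $e^*\delta^*\mu$ is either $0$ or $r(\mu)$, since $|\delta e|=n+1=|\mu|$. The whole product is therefore a multiple of $\beta'\nu^*$, which lies in $Q_n$.

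The other two inclusions follow by the same prefix-cancellation bookkeeping. For $H_n^-Q_{n+1}H_n^+$, the lengths $n+1$ cancel exactly and leave a path pair of lengths $(n,n)$. For $H_n^+Q_nH_n^-$, the middle cancellation $\nu^*\gamma$ is between length-$n$ paths, leaving a pair of lengths $(n+1,n+1)$.

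\textbf{Intertwining (3).} Take $h=\mu\nu^*$ with $|\mu|=n+1$, $|\nu|=n$ and $r(\mu)=r(\nu)=w$. Then
\[
 hz_v^{(n)}=\sum_{\alpha\in G^nv}\mu\nu^*\alpha\alpha^*=\delta_{v,w}h,
\]
and likewise $z_v^{(n+1)}h=\delta_{v,w}h$. The statement for $h'$ is symmetric, and linearity finishes the proof.

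The step I expect to be the main obstacle is the bookkeeping in (2) together with the injectivity in (3). Both are routine once the prefix-cancellation rule $\beta^*\gamma=\delta_{\beta,\gamma}r(\beta)$ for equal lengths is stated. For injectivity, I would use the simplicity argument above rather than appealing to a basis theorem.
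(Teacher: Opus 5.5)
Your proposal is correct and follows essentially the same route as the paper: matrix units at fixed length give the block decomposition, the Cuntz--Krieger expansion $\alpha\beta^*=\sum_{s(f)=r(\alpha)}\alpha f(\beta f)^*$ gives $Q_n\subseteq Q_{n+1}$ and $H_n^\pm\subseteq H_{n+1}^\pm$, the path normal form gives the exhaustions, and multiplying path monomials gives (2) and the intertwining relations. Beyond the paper, you also prove that the matrix-unit map is injective (the blocks are simple and vertices are nonzero), which the paper leaves implicit in saying the elements ``form matrix units.''
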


\begin{proof}
At fixed length, the elements $\alpha\beta^*$ with
$r(\alpha)=r(\beta)$ form matrix units, which gives the canonical block decomposition. Since $G$ has no sinks, repeated use of the Cuntz--Krieger relation yields
\begin{equation}\label{eq:path-extension}
 \alpha\beta^*
   =\sum_{s(f)=r(\alpha)}\alpha f(\beta f)^*.
\end{equation}
This proves $Q_n\subseteq Q_{n+1}$ and, in the same way,
$H_n^\pm\subseteq H_{n+1}^\pm$. The path normal form gives the three exhaustions. The products in (ii) and equation \eqref{eq:path-block-intertwining} follow by multiplying path matrix units, since every nonzero product is again a path matrix unit of the indicated length.
\end{proof}

We now identify the inclusion matrix of two consecutive $Q_n$
algebras. This is where the original adjacency matrix appears inside the degree-zero algebra.

\begin{prp} \label{prop:path-multiplicity}
Let $A$ be a finite essential matrix. Label the simple right modules of $Q_n(G_A)$ and $Q_{n+1}(G_A)$ by their terminal vertices their canonical block decompositions. Then 
$$
\Mmult{Q_n(G_A)}{Q_{n+1}(G_A)}=A.
$$
\end{prp}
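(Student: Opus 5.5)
Both $Q_n=Q_n(G_A)$ and $Q_{n+1}=Q_{n+1}(G_A)$ are split semisimple by Lemma~\ref{lem:path-pair}, so the multiplicity matrix is the ordinary Bratteli matrix of the inclusion. The plan is to compute, for each $w\in V_A$, the restriction to $Q_n$ of the simple right $Q_{n+1}$-module $T_w$ attached to the $w$-block, and to read off its decomposition into simples $S_v$ of $Q_n$, $v\in V_A$. Concretely, take $T_w=\spn\{\beta^*: \beta\in G^{n+1}w\}$ viewed as the row space of the $w$-block, i.e.\ $T_w\cong \gamma\gamma^* Q_{n+1}$ for a fixed $\gamma\in G^{n+1}w$. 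Similarly $S_v\cong \delta\delta^*Q_n$ for $\delta\in G^n v$, with basis $\{\delta\mu^*:\mu\in G^nv\}$.

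The key step is to decompose $T_w$ as a $Q_n$-module using the last edge. Write each $\beta\in G^{n+1}w$ uniquely as $\beta=\mu f$ with $\mu\in G^n$ and $f\in G^1$, $r(f)=w$. For a fixed edge $f$ with $s(f)=v$, $r(f)=w$, consider
\[
T_w^f=\spn\{\gamma(\mu f)^*:\mu\in G^nv\}.
\]
Then $T_w=\bigoplus_{f\in G^1 w}T_w^f$ as vector spaces. I will check that each $T_w^f$ is $Q_n$-invariant. For this, use $\mu'\nu^*\in Q_n$ with $|\mu'|=|\nu|=n$ and compute $\gamma f^*\mu^*\mu'\nu^* = \delta_{\mu,\mu'}\gamma f^*\nu^*$, which stays in $T_w^f$. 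Here $f^*\mu^*\mu'=\delta_{\mu\mu'} f^*r(\mu)$ is used, and the result vanishes unless $r(\nu)=v$.

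The map $\gamma(\mu f)^*\mapsto \delta\mu^*$ is then an isomorphism $T_w^f\cong S_v$ of right $Q_n$-modules, with matching matrix-unit actions. Since the edges $f$ with $s(f)=v$ and $r(f)=w$ number exactly $A_{vw}$, we get $\operatorname{Res}T_w\cong\bigoplus_v S_v^{A_{vw}}$, hence $\Mmult{Q_n}{Q_{n+1}}_{vw}=A_{vw}$.

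The main care point is conventions rather than substance. First, right modules of the $v$-block of $Q_n$ must be labelled by the terminal vertex $v$, consistently with \eqref{eq:path-pair-blocks}. Second, the inclusion $Q_n\subseteq Q_{n+1}$ is realized via \eqref{eq:path-extension}, so the $Q_n$-action on $T_w$ is genuinely the product in $\Lk{G}$ computed above. As a sanity check, the count of dimensions $|G^{n+1}w|=\sum_v |G^nv|A_{vw}$ agrees with $d_j=\sum_i c_i\Mmult{C}{D}_{ij}$, so rows indexed by $Q_n$ and columns by $Q_{n+1}$ give $A$ rather than $A^{\transpose}$.
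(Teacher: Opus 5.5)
Your proposal is correct and follows essentially the same argument as the paper: you split the simple $w$-block module of $Q_{n+1}(G_A)$ according to the last edge $f$ of the indexing paths, show that each piece is a $Q_n(G_A)$-submodule isomorphic to the simple $v$-block module with $v=s(f)$, and count the $A_{vw}$ edges from $v$ to $w$. The only cosmetic difference is that you realize the simple module concretely as $\gamma\gamma^*Q_{n+1}$ and check invariance by a direct product computation in $\Lk{G_A}$, whereas the paper appeals to the path-extension formula \eqref{eq:path-extension}.
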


\begin{proof}
Fix $w\in G_A^0$, and let $T_w^{(n+1)}$ be the simple right module of the $w$-block of $Q_{n+1}(G_A)$. It has a standard basis indexed by the paths of length $n+1$ ending at $w$. Every such path has a unique factorization as $\alpha f$ for $|\alpha|=n$ and $f$ an edge from $v$ to $w$. For a fixed such edge $f$, let $X_f$ be the span of the basis vectors indexed by the paths $\alpha f$. The inclusion $Q_n(G_A)\subseteq Q_{n+1}(G_A)$ is given on matrix units by equation \eqref{eq:path-extension}, so it follows that $X_f$ is a $Q_n(G_A)$-submodule and that $X_f\cong T_v^{(n)}$, the simple module of the $v$-block of $Q_n(G_A)$. Distinct last edges
give a direct sum decomposition
\[
 \operatorname{Res}_{Q_n}^{Q_{n+1}}T_w^{(n+1)}
 \cong
 \bigoplus_{v\in G_A^0}
 \big(T_v^{(n)}\big)^{A_{vw}}.
\]
Hence the multiplicity of the $v$-simple in the restriction of the
$w$-simple is exactly 
$$
\#\{f\in G_A^1:s(f)=v,\ r(f)=w\}=A_{vw}.
$$ 
\end{proof}

\section{Graded isomorphisms for essential matrices} \label{sec:isomorphism}

Let $A$ and $B$ be finite matrices where $A$ is essential and $B$ with no zero rows. Suppose that $\varphi:\Lk{G_A}\longrightarrow \Lk{G_B}$ is a graded $k$-algebra isomorphism. Write
\begin{equation*}
 p_u=\varphi(u),\ \ \ x_e=\varphi(e),\ \ \ y_e=\varphi(e^*).
\end{equation*}
Inside $\Lk{G_B}$ these elements continue to satisfy the Cuntz-Krieger relations
\begin{align*}
 p_up_v=\delta_{u,v}p_u, \ \ \ y_ex_f=\delta_{e,f}p_{r(e)},\ \ \ p_u=\sum_{s(e)=u}x_ey_e.
\end{align*}
In particular, we have that 
\begin{equation}\label{eq:sum-xy}
 1=\sum_{e\in G_A^1}x_ey_e.
\end{equation}
Denote $P=\spn\{p_u:u\in G_A^0\}$, and for a path $\alpha=e_1\cdots e_r$, write $x_\alpha=x_{e_1}\cdots x_{e_r}$ and
$y_\alpha=y_{e_r}\cdots y_{e_1}$ for the corresponding path and ghost path.

For our given matrix $B$ with no zero rows, define $Q_n(G_B)$ and $H_n^\pm(G_B)$ by the same formulas as in the previous section. The filtration, exhaustion, and product statements of Lemma~\ref{lem:path-pair} remain valid, and the only difference is that a terminal-vertex block in equation \eqref{eq:path-pair-blocks} may generally be smaller or absent.

\begin{lem}\label{lem:edge-enlargement}
Let $K\subseteq \Lk{G_B}_0$ be a finite-dimensional unital subalgebra
containing $P$, and define $\widehat K=\sum_{e,f\in G_A^1}x_eKy_f$. Then we have the following
\begin{enumerate}
\item $\widehat K$ is a finite-dimensional unital subalgebra of $\Lk{G_B}_0$, and $b\in\widehat K$ if and only if $y_ebx_f\in K\quad\text{for every }e,f\in G_A^1$. In that case we have the reconstruction
\begin{equation*}
 b=\sum_{e,f\in G_A^1}x_e(y_ebx_f)y_f.
\end{equation*}
\item Choose, for every $u\in G_A^0$, an edge $\epsilon_u$ with
$r(\epsilon_u)=u$, and denote
\begin{equation*}
 s=\sum_{u\in G_A^0}x_{\epsilon_u},\ \ \ 
 t=\sum_{u\in G_A^0}y_{\epsilon_u},\ \ \  q=st.
\end{equation*}
Then $ts=1$, the idempotent $q$ is full in $\widehat K$, and the map $\Theta_K:K\longrightarrow q\widehat Kq$ given by $a\longmapsto sat$ is an algebra isomorphism.
\item If $K\subseteq L \subseteq \Lk{G_B}_0$ are two finite dimensional unital subalgebras containing $P$, use the full-corner isomorphisms $\Theta_K$ and $\Theta_L$ to label the simple modules of $\widehat K$ and $\widehat L$ by those of $K$ and $L$,
respectively. Then we have that $\Mmult{\widehat K}{\widehat L}=\Mmult{K}{L}$.
\end{enumerate}
\end{lem}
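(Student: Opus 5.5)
For (i) I will compute directly with the Cuntz--Krieger relations satisfied by the $x_e,y_e,p_u$. Since $\varphi$ is graded, $x_e\in\Lk{G_B}_1$ and $y_f\in\Lk{G_B}_{-1}$, so $\widehat K\subseteq\Lk{G_B}_0$. Finite dimensionality is clear, because $\widehat K$ is a finite sum of images of $K$. For closure under products,
\[
 x_eay_f\cdot x_{e'}a'y_{f'}=\delta_{f,e'}\,x_e\bigl(a\,p_{r(f)}\,a'\bigr)y_{f'},
\]
and the bracketed term lies in $K$ because $P\subseteq K$. The unit is handled by \eqref{eq:sum-xy}, which gives $1=\sum_e x_e p_{r(e)}y_e\in\widehat K$. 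For the membership criterion, if $b=\sum x_eay_f$ then $y_{e'}bx_{f'}=p_{r(e')}a_{e'f'}p_{r(f')}\in K$. Conversely, if every $y_ebx_f$ lies in $K$, then inserting \eqref{eq:sum-xy} on both sides of $b$ gives $b=\sum_{e,f}x_e(y_ebx_f)y_f\in\widehat K$, and this is also the reconstruction formula.

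For (ii) I first compute $ts$. We have $ts=\sum_{u,u'}y_{\epsilon_u}x_{\epsilon_{u'}}=\sum_u p_u=1$, since the $\epsilon_u$ are distinct edges (they have distinct ranges). It follows that $q$ is an idempotent. Moreover $q=\sum_{u,u'}x_{\epsilon_u}p_u p_{u'}y_{\epsilon_{u'}}$ lies in $\widehat K$, because each $p_u\in K$. For $a\in K$ the element $sat$ is a sum of terms $x_{\epsilon_u}ay_{\epsilon_{u'}}\in\widehat K$. It is fixed by $q$ on both sides because $ts=1$, and we get $\Theta(a)\Theta(a')=sats\,a't=\Theta(aa')$ with $\Theta(1)=q$. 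The map is injective since $t\Theta(a)s=a$. For surjectivity, take $qbq$ with $b\in\widehat K$; then $qbq=s(tbs)t$, and $tbs=\sum_{u,u'}y_{\epsilon_u}bx_{\epsilon_{u'}}\in K$ by (i).

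Fullness is the step needing the most care. The key identity is
\[
 x_e=x_e p_{r(e)}=x_e\,y_{\epsilon_{r(e)}}x_{\epsilon_{r(e)}},
\]
so $x_eay_f=(x_ep_{r(e)}y_{\epsilon_{r(e)}})\,(x_{\epsilon_{r(e)}}ay_{\epsilon_{r(f)}})\,(x_{\epsilon_{r(f)}}p_{r(f)}y_f)$. Here the middle factor is $q$-sandwiched, since $x_{\epsilon_u}=q x_{\epsilon_u}$ and $y_{\epsilon_u}=y_{\epsilon_u}q$. The outer factors lie in $\widehat K$, as $p\in K$. Hence every generator of $\widehat K$ lies in $\widehat Kq\widehat K$, and $q$ is full.

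For (iii), a full idempotent gives a Morita equivalence $N\mapsto Nq$ from $\widehat K$-modules to $q\widehat Kq$-modules, which bijects simple modules. I then have to check that the labelling is compatible with the inclusion: since $s,t$ do not depend on the subalgebra, $\Theta_L$ restricts to $\Theta_K$ on $K$, and $q\widehat Kq\subseteq q\widehat Lq$. For a simple $\widehat L$-module $T$, the functor $(-)q$ is exact from $\widehat K$-modules to $q\widehat Kq$-modules and sends simples to simples or zero, with no simple killed since $q$ is full. It therefore preserves Jordan--H\"older multiplicities: $[\operatorname{Res}T:S]=[(\operatorname{Res}T)q:Sq]$. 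Finally, $(\operatorname{Res}^{\widehat L}_{\widehat K}T)q=\operatorname{Res}^{q\widehat Lq}_{q\widehat Kq}(Tq)$, and transporting along $\Theta$ gives $\Mmult{\widehat K}{\widehat L}=\Mmult{K}{L}$. The point I expect needs the most checking is the exactness and multiplicity-preservation of $(-)q$ for possibly non-semisimple $\widehat K$. This follows because $Nq\cong\operatorname{Hom}(q\widehat K,N)$ and $q\widehat K$ is projective, while a simple $S$ with $Sq=0$ would have $S=S\widehat Kq\widehat K=0$.
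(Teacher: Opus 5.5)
Your proposal is correct and follows essentially the same approach as the paper: the same coefficient test and reconstruction via \eqref{eq:sum-xy}, the same inverse $b\mapsto tbs$ for $\Theta_K$, and the exact full-corner functor $T\mapsto Tq$ for (iii). The one minor difference is in fullness: you show that every generator $x_eay_f$ lies in $\widehat Kq\widehat K$, whereas the paper shows $1$ does via $x_ey_e=(x_ey_{\epsilon_u})q(x_{\epsilon_u}y_e)$; beyond that, you just spell out the compatibility $\Theta_L|_K=\Theta_K$ and the multiplicity preservation that the paper states briefly.
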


\begin{proof}
It is clear that $\widehat K$ is an algebra, and it is unital because $1=\sum_{e\in G_A^1}x_ep_{r(e)}y_e$. If $b\in\widehat K$, then Cuntz-Krieger relations show that every coefficient
$y_ebx_f$ lies in $K$. Conversely, if all these coefficients lie in $K$, insert equation \eqref{eq:sum-xy} on both sides of $b$ to obtain the desired reconstruction equation in item (1).

The chosen edges have distinct range vertices, and hence we have $ts=\sum_{u\in G_A^0}p_u=1$, so that $q=st$ is an idempotent. The map $\Theta_K$ is a homomorphism, and $b\mapsto tbs$ is its inverse. Indeed, if $b\in q\widehat Kq$, then $tbs$ is a sum of coefficients of $b$ and therefore belongs to $K$ by item (1), while $ts=1$ gives the two inverse identities.

To see that $q$ is full, fix $e\in G_A^1$ and put $u=r(e)$. Both
$x_ey_{\epsilon_u}$ and $x_{\epsilon_u}y_e$ belong to $\widehat K$, and $x_ey_e=(x_ey_{\epsilon_u})q(x_{\epsilon_u}y_e)$. Summing over $e$ and using equation \eqref{eq:sum-xy} gives $1\in\widehat Kq\widehat K$.

For item (3), the same idempotent $q$ is full in $\widehat K$ and
$\widehat L$, and the two corner isomorphisms are compatible with the inclusion $K\subseteq L$. The functor $T\mapsto Tq$ is exact and implements the full-corner equivalence. Applying it to a composition series for the restriction of a simple $\widehat L$-module preserves all Jordan--H\"older multiplicities. Under the isomorphisms $\Theta_K$ and $\Theta_L$, the corner inclusion $q\widehat Kq\subseteq q\widehat Lq$ is exactly $K\subseteq L$, which proves the state equality of matrices.
\end{proof}

For $r\geq 0$, let
\begin{equation*}
 P_r=\varphi(Q_r(G_A))
     =\spn\{x_\alpha y_\beta:|\alpha|=|\beta|=r,
                              \ r(\alpha)=r(\beta)\}.
\end{equation*}
Since $A$ is essential, we have that
\begin{equation*}
 P=P_0\subseteq P_1\subseteq\cdots,
 \ \ \ \text{and} \ \ \ \Lk{G_B}_0=\bigcup_{r\geq 0}P_r.
\end{equation*}
Separating the first edge from a path, it follows that $\widehat{P_r}=P_{r+1}$. Now, choose $n$ large enough that
\begin{equation}\label{eq:choose-n}
 p_u\in Q_n(G_B),\ \ \
 x_e\in H_n^+(G_B),\ \ \ 
 y_e\in H_n^-(G_B)
\end{equation}
for every $u\in G_A^0$ and $e\in G_A^1$. Denote $C=Q_n(G_B)$, and note that since $C$ is finite-dimensional and $\Lk{G_B}_0=\bigcup_rP_r$, we may choose $N\geq 1$
with $C\subseteq P_N$. Define $K_r=C\cap P_r$ for $0\leq r\leq N$, so that $K_0=P$ and $K_N=C$.

\begin{lem}\label{lem:finite-chain}
For $0\leq r<N$, we have that $K_{r+1}=C\cap\widehat K_r$. Thus,
\begin{equation}\label{eq:four-chain}
 K_r\subseteq K_{r+1}\subseteq\widehat K_r
       \subseteq\widehat K_{r+1}.
\end{equation}
Moreover, we have $\widehat C=Q_{n+1}(G_B)$.
\end{lem}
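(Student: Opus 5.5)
We prove the three assertions in turn, using only the coefficient description of $\widehat K$ in Lemma~\ref{lem:edge-enlargement}(1), the identity $\widehat{P_r}=P_{r+1}$, and the product rules of Lemma~\ref{lem:path-pair}(2) together with the choice \eqref{eq:choose-n} of $n$.

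\textbf{The identity $\widehat C=Q_{n+1}(G_B)$.} We show both inclusions.
\begin{enumerate}
\item[$\subseteq$:] Since $x_e\in H_n^+(G_B)$ and $y_f\in H_n^-(G_B)$, Lemma~\ref{lem:path-pair}(2) gives $x_eCy_f\subseteq H_n^+Q_nH_n^-\subseteq Q_{n+1}$. Hence $\widehat C\subseteq Q_{n+1}(G_B)$.
\item[$\supseteq$:] Let $b\in Q_{n+1}(G_B)$. Each coefficient satisfies $y_ebx_f\in H_n^-Q_{n+1}H_n^+\subseteq Q_n=C$. By the criterion in Lemma~\ref{lem:edge-enlargement}(1), $b\in\widehat C$.
\end{enumerate}
To apply Lemma~\ref{lem:edge-enlargement}, $C$ must contain $P$. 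This holds because each $p_u\in Q_n(G_B)$ by \eqref{eq:choose-n}.

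\textbf{The identity $K_{r+1}=C\cap\widehat K_r$.} First, $K_r=C\cap P_r$ contains $P$, since $P=P_0\subseteq P_r$ and $P\subseteq C$. It is a finite-dimensional unital subalgebra of $\Lk{G_B}_0$, so $\widehat K_r$ is defined.

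Because $K_r\subseteq P_r$, we get $\widehat K_r\subseteq\widehat{P_r}=P_{r+1}$. Therefore $C\cap\widehat K_r\subseteq C\cap P_{r+1}=K_{r+1}$.

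For the reverse inclusion, take $b\in K_{r+1}=C\cap P_{r+1}$. By Lemma~\ref{lem:edge-enlargement}(1), we must show $y_ebx_f\in K_r=C\cap P_r$ for all $e,f$.
\begin{enumerate}
\item[(a)] Since $b\in P_{r+1}=\widehat{P_r}$, the coefficient criterion applied to $P_r$ gives $y_ebx_f\in P_r$.
\item[(b)] Since $b\in C=Q_n$, we have $y_ebx_f\in H_n^-Q_nH_n^+\subseteq Q_n=C$.
\end{enumerate}
Thus $y_ebx_f\in K_r$, so $b\in\widehat K_r$, and $b\in C$ by assumption. This step is the crux of the argument: both containments in (a) and (b) are detected by the same coefficients $y_ebx_f$, so they can be intersected.

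\textbf{The chain \eqref{eq:four-chain}.} We check the three inclusions.
\begin{enumerate}
\item[(i)] $K_r\subseteq K_{r+1}$, because $P_r\subseteq P_{r+1}$.
\item[(ii)] $K_{r+1}\subseteq\widehat K_r$, because $K_{r+1}=C\cap\widehat K_r$ by the previous step.
\item[(iii)] $\widehat K_r\subseteq\widehat K_{r+1}$, because the hat construction is monotone in $K$ and $K_r\subseteq K_{r+1}$.
\end{enumerate}

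The only mild care needed is to confirm that every algebra to which Lemma~\ref{lem:edge-enlargement} is applied contains $P$ and is unital. Unitality holds since $1=\sum_u p_u\in P$.
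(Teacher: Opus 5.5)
Your proof is correct and follows the paper's argument essentially verbatim: you get $K_{r+1}\subseteq C\cap\widehat K_r$ by checking that each coefficient $y_ebx_f$ lies both in $P_r$ (via $\widehat{P_r}=P_{r+1}$) and in $C$ (via $H_n^-Q_nH_n^+\subseteq Q_n$), the reverse inclusion from $\widehat K_r\subseteq\widehat{P_r}$, and $\widehat C=Q_{n+1}(G_B)$ from the product rules of Lemma~\ref{lem:path-pair}(2) together with the coefficient test. Your extra checks that $C$ and $K_r$ are unital and contain $P$ are valid; they make explicit what the paper leaves implicit.
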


\begin{proof}
Let $c\in K_{r+1}=C\cap P_{r+1}$. Since $\widehat{P_r}=P_{r+1}$, we have that $y_ecx_f\in P_r$ for every $e,f\in G_A^1$. On the other hand, equation \eqref{eq:choose-n} and Lemma~\ref{lem:path-pair}(ii) give
\[
 y_ecx_f\in H_n^-Q_n(G_B)H_n^+\subseteq Q_n(G_B)=C.
\]
Thus all the coefficients $y_ecx_f$ belong to $K_r$, and
Lemma~\ref{lem:edge-enlargement}(i) gives $c\in\widehat K_r$. This proves that $K_{r+1} \subseteq C\cap\widehat K_r$. The reverse inclusion follows from $\widehat K_r\subseteq\widehat P_r=P_{r+1}$. The rest of the inclusions in the lemma are immediate.

To show $\widehat C=Q_{n+1}(G_B)$, we note first that Lemma~\ref{lem:path-pair}(ii) yields $\widehat C\subseteq Q_{n+1}(G_B)$. Conversely, if
$c\in Q_{n+1}(G_B)$, we have
\[
 y_ecx_f\in H_n^-Q_{n+1}(G_B)H_n^+\subseteq Q_n(G_B)=C
\]
for every $e,f\in G_A^1$, so that the coefficient test
in Lemma \ref{lem:edge-enlargement}(i) yields $c\in\widehat C$.
\end{proof}

\begin{thm}
\label{thm:isomorphism-rigidity}
Let $A$ and $B$ be finite essential matrices. If $\Lk{G_A}\cong_{\mathrm{gr},k}\Lk{G_B}$ then $A\SSE B$.
\end{thm}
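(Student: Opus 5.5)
We fix the graded isomorphism $\varphi$ together with the data $p_u,x_e,y_e$, the integer $n$ from \eqref{eq:choose-n}, $C=Q_n(G_B)$, $N$ with $C\subseteq P_N$, and the finite chain $K_r=C\cap P_r$, $0\leq r\leq N$. For each $r$ the plan is to apply Lemma~\ref{lem:interlacing-inclusions} to the four-term chain \eqref{eq:four-chain}, namely $C_0=K_r$, $C_1=K_{r+1}$, $D_0=\widehat K_r$, $D_1=\widehat K_{r+1}$. The simple modules of $\widehat K_r$ and $\widehat K_{r+1}$ are labelled by those of $K_r$ and $K_{r+1}$ via the full-corner isomorphisms $\Theta_{K_r}$ and $\Theta_{K_{r+1}}$ of Lemma~\ref{lem:edge-enlargement}(2). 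Lemma~\ref{lem:edge-enlargement}(3) then gives exactly the hypothesis $\Mmult{\widehat K_r}{\widehat K_{r+1}}=\Mmult{K_r}{K_{r+1}}$. Writing $A_r=\Mmult{K_r}{\widehat K_r}$, Lemma~\ref{lem:interlacing-inclusions} yields $A_r=R_rS_r$ and $A_{r+1}=S_rR_r$ with $R_r=\Mmult{K_r}{K_{r+1}}$ and $S_r=\Mmult{K_{r+1}}{\widehat K_r}$. Both factors have nonnegative integer entries, so $A_r\ESSE A_{r+1}$ for every $0\leq r<N$, and hence $A_0\SSE A_N$.

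It remains to identify the endpoints. At $r=0$ we have $K_0=P=P_0=\varphi(Q_0(G_A))$ and $\widehat K_0=\widehat{P_0}=P_1=\varphi(Q_1(G_A))$. Since $\varphi$ is an algebra isomorphism, $A_0=\Mmult{Q_0(G_A)}{Q_1(G_A)}$, and Proposition~\ref{prop:path-multiplicity} shows this equals $A$. Two labellings have to be compared here. The simples of $\widehat K_0$ are labelled through $\Theta_{K_0}$, and we will check that $\Theta_{K_0}$ sends the vertex idempotent $p_u$ to $x_{\epsilon_u}y_{\epsilon_u}$. This element lies in the $u$-block of $P_1$, because $\epsilon_u$ ends at $u$. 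So the labelling coincides with the terminal-vertex labelling, possibly up to a simultaneous permutation of rows and columns, which is harmless for strong shift equivalence.

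At $r=N$ we have $K_N=C=Q_n(G_B)$, and by Lemma~\ref{lem:finite-chain}, $\widehat K_N=\widehat C=Q_{n+1}(G_B)$. Hence $A_N=\Mmult{Q_n(G_B)}{Q_{n+1}(G_B)}$, which by Proposition~\ref{prop:path-multiplicity} applied to the essential matrix $B$ equals $B$. Again the labelling of the simples of $\widehat C$ comes via $\Theta_C$, and it may differ from the terminal-vertex labelling of $Q_{n+1}(G_B)$ by some bijection. To handle this, we will use the intertwining \eqref{eq:path-block-intertwining}: the elements $x_e\in H_n^+$ and $y_e\in H_n^-$ satisfy $x_e z_v^{(n)}=z_v^{(n+1)}x_e$. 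From this, $\Theta_C(z_v^{(n)})=s z_v^{(n)} t=z_v^{(n+1)}st$. This element is a nonzero idempotent, since $t z_v^{(n+1)} s = z_v^{(n)}ts=z_v^{(n)}$, and it lies inside the $v$-block $z_v^{(n+1)}Q_{n+1}(G_B)$. Therefore the simple of $\widehat C$ labelled by the $v$-simple of $C$ is the $v$-block simple of $Q_{n+1}(G_B)$, so $A_N=B$ exactly. The analogous computation, applied inside $P_1$ via $\varphi$, settles the $r=0$ endpoint.

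I expect the main obstacle to be this endpoint labelling check. The interior steps are formal consequences of the lemmas already proved, whereas the endpoints require that the abstract corner labelling agree with the terminal-vertex labelling of Proposition~\ref{prop:path-multiplicity}. Should exact agreement fail, a fallback is available. Any mismatch of labels of $D$ relative to $C$ changes the multiplicity matrix by right multiplication by a permutation matrix $\Pi$. One would then need to absorb this into the SSE chain, which works when the same relabelling acts compatibly on the rows, using $\Pi^{-1}B\Pi\ESSE B$ via $R=\Pi^{-1}$, $S=B\Pi$. This is why the intertwining argument, which gives exact agreement, is preferable.
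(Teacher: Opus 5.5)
Your proposal is correct and follows the paper's proof essentially step for step. It uses the same four-term chain $K_r\subseteq K_{r+1}\subseteq\widehat K_r\subseteq\widehat K_{r+1}$, fed into Lemma~\ref{lem:interlacing-inclusions} via Lemma~\ref{lem:edge-enlargement}(3), and the same endpoint label checks: $\Theta_P(p_u)=x_{\epsilon_u}y_{\epsilon_u}$ lies in the $u$-block of $P_1$, and $\Theta_C(z_v^{(n)})=z_v^{(n+1)}q$ by \eqref{eq:path-block-intertwining}. The differences are cosmetic: you route the $r=0$ endpoint through $\varphi$ and Proposition~\ref{prop:path-multiplicity}, and you get nonvanishing from $tz_v^{(n+1)}s=z_v^{(n)}$ rather than fullness of $q$; your permutation fallback is unnecessary since you prove exact label agreement.
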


\begin{proof}
For $0\leq r\leq N$, define $A_r=\Mmult{K_r}{\widehat K_r}$, where the simple modules of $\widehat K_r$ are relabelled by those of
$K_r$ using Lemma~\ref{lem:edge-enlargement}(ii). For $r<N$, put
\[
 R_r=\Mmult{K_r}{K_{r+1}},\qquad
 S_r=\Mmult{K_{r+1}}{\widehat K_r}.
\]
Then, Lemma~\ref{lem:edge-enlargement}(iii) yields
\[
 \Mmult{\widehat K_r}{\widehat K_{r+1}}
   =\Mmult{K_r}{K_{r+1}}=R_r.
\]
Applying Lemma~\ref{lem:interlacing-inclusions} to the four algebras in equation \eqref{eq:four-chain}, we obtain that $A_r = R_rS_r$ and $A_{r+1}=S_rR_r$. Thus every consecutive pair $A_r,A_{r+1}$ is elementary strong shift equivalent.

It remains to identify the endpoints. Since $K_0=P$ and
$\widehat K_0=P_1$, label the simple modules of $P$ by the vertices of
$G_A$. The algebra $P_1$ is a direct sum of matrix blocks indexed by
$w\in G_A^0$, and a standard basis for the simple module of the $w$-block
is indexed by the edges ending at $w$. By Cuntz-Krieger relations, right multiplication by $p_u$ selects precisely the basis vectors indexed by
edges from $u$ to $w$. Hence
\[
 \Mmult{P}{P_1}_{u,w}
   =\#\{e:s(e) =u,\ r(e)=w\}=A_{uw}.
\]
The full-corner label agrees with this terminal-vertex label because
$\Theta_P(p_w)=x_{\epsilon_w}y_{\epsilon_w}$ is a minimal idempotent in
the $w$-block. Therefore, we have $A_0=A$.

At the other endpoint, $K_N=C=Q_n(G_B)$ and
$\widehat K_N=Q_{n+1}(G_B)$. Since the element $s$ belongs to $H_n^+(G_B)$,
equation \eqref{eq:path-block-intertwining} gives $s z_v^{(n)}=z_v^{(n+1)}s$. Consequently,
\[
 \Theta_C(z_v^{(n)})
   =s z_v^{(n)}t
   =z_v^{(n+1)}q.
\]
Since $q$ is full in $Q_{n+1}(G_B)$, we have that $z_v^{(n+1)} q \neq 0$ for every $v$. It is the identity of the $v$-summand of the full corner $qQ_{n+1}(G_B)q$ Thus the label induced by the full-corner equivalence is precisely the natural terminal-vertex label $v$. Hence, the full-corner labels agree with the natural terminal-vertex labels on $Q_n(G_B)$ and $Q_{n+1}(G_B)$. Proposition~\ref{prop:path-multiplicity} therefore yields $A_N=\Mmult{Q_n(G_B)}{Q_{n+1}(G_B)}=B$. Thus, we have obtained $A\SSE B$.
\end{proof}

\section{Graded Morita equivalence in terms of strong shift equivalence} \label{sec:technical}

We now reduce from a graded Morita equivalence to the situation treated in the previous section. The only auxiliary graph which may have sources is a graph arising from a full corner. Define
\[
 I_n(C)=\{v\in G_C^0:\text{there is a path of length $n$ ending at $v$}\}
\]
and let $C_n$ be the corner of $C$ with respect to the vertices of $I_n(C)$, so that $G_{C_n}$ is the subgraph of $G_C$ induced by $I_n(C)$.

\begin{lem}\label{lem:essential-core}
The sets $I_n(C)$ decrease and eventually stabilize. For every $n$ we have that $C_n\ESSE C_{n+1}$. If $I_N(C)=I_{N+1}(C)=I$, then The corner of $C$ by the set $I$ is essential, and $C\SSE C_N$. Moreover, under the terminal-vertex labels, we have
\begin{equation*}
 \Mmult{Q_N(G_C)}{Q_{N+1}(G_C)}=C_N.
\end{equation*}
\end{lem}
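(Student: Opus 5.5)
The plan is to treat $C$ as a finite matrix with no zero rows (no sinks in $G_C$; this is the setting in which the lemma will be applied to the row-repeated matrix $B^{\mathbf m}$), possibly with zero columns. The four claims will be handled in order: stabilization, the elementary relation $C_n\ESSE C_{n+1}$, essentiality of the stable corner together with $C\SSE C_N$, and finally the multiplicity identity.

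\textbf{Stabilization.} If a path of length $n+1$ ends at $v$, then dropping its first edge gives a path of length $n$ ending at $v$. Hence $I_{n+1}(C)\subseteq I_n(C)$. Since $G_C^0$ is finite, the chain is eventually constant.

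\textbf{The relation $C_n\ESSE C_{n+1}$.} Put $I_n=I_n(C)$. If $v\in I_{n+1}$, then a path of length $n+1$ ending at $v$ has its last edge coming from a vertex of $I_n$. Conversely, any edge $u\to w$ with $u\in I_n$ has $w\in I_{n+1}$. Therefore $C_{I_n,I_n}$ has nonzero entries only in the columns indexed by $I_{n+1}$. Take $R=C_{I_n,I_{n+1}}$ and $S=C_{I_{n+1},I_n}$. Then
\[
 RS=C_{I_n,I_{n+1}}C_{I_{n+1},I_n}=C_{I_n,I_n}=C_n,
\]
because the columns outside $I_{n+1}$ vanish. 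On the other side, $SR=C_{I_{n+1},I_{n+1}}=C_{n+1}$ holds by direct computation, since the only summation indices contributing lie in $I_{n+1}\subseteq I_n$. Chaining these relations from $C_0=C$ (note $I_0=G_C^0$) gives $C\SSE C_N$.

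\textbf{Essentiality of the stable corner.} Suppose $I=I_N=I_{N+1}$. Every $v\in I$ lies in $I_{N+1}$, so it receives an edge from some vertex of $I_N=I$; thus no column of $C_I$ is zero. For rows, take $u\in I$. Since $C$ has no zero rows, $u$ emits an edge to some $w$, and $w\in I_{N+1}=I$; thus no row of $C_I$ is zero either.

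\textbf{Multiplicity identity.} This step needs care, since $Q_N(G_C)$ has one block for each vertex of $I_N$ (paths of length $N$ end exactly there), and $Q_{N+1}(G_C)$ has one block for each vertex of $I_{N+1}=I$. I would redo the proof of Proposition~\ref{prop:path-multiplicity} in this setting. The extension formula \eqref{eq:path-extension} still holds because there are no sinks. The simple module $T_w^{(N+1)}$ splits according to last edges $f$, and the first part $\alpha$ of a path $\alpha f$ is a length-$N$ path ending at $s(f)\in I_N=I$. This gives
\[
 \Mmult{Q_N(G_C)}{Q_{N+1}(G_C)}_{vw}=C_{vw}\quad\text{for } v,w\in I,
\]
which is exactly $C_N$.

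The main obstacle I expect is bookkeeping rather than substance. One must check that the labels are consistent, with both algebras indexed by $I$ only because $I_N=I_{N+1}$, and verify that the extension formula still gives a well-defined inclusion when sources are present.
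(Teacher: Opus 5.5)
Your treatment of stabilization, of essentiality of the stable corner, and of the multiplicity identity matches the paper's argument. The gap is in the step $C_n\ESSE C_{n+1}$. With $R=C_{I_n,I_{n+1}}$ and $S=C_{I_{n+1},I_n}$, both factors are submatrices of $C$, so the products are quadratic in the entries of $C$. Explicitly, $(RS)_{u,u'}=\sum_{j\in I_{n+1}}C_{uj}C_{ju'}$ counts length-two paths from $u$ to $u'$ through $I_{n+1}$, and likewise $(SR)_{j,j'}=\sum_{u\in I_n}C_{ju}C_{uj'}$. These are blocks of $C^2$, not of $C$. For example, if $C$ is essential then $I_n=I_{n+1}=G_C^0$ for all $n$, so $R=S=C$ and your identities would assert $C^2=C$. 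Already $C=[2]$ gives $RS=[4]\neq[2]$. The vanishing of the columns of $C_n$ outside $I_{n+1}$ does not repair this, because that fact concerns $C_n$ itself and not the product of two copies of $C$.

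The fix, which is what the paper does, is to take one factor to be a $0$-$1$ coordinate-selection matrix. Keep $R=C_{I_n,I_{n+1}}$, and set $S_{j,u}=\delta_{j,u}$ for $j\in I_{n+1}$ and $u\in I_n$. Then $(RS)_{u,u'}$ equals $C_{uu'}$ if $u'\in I_{n+1}$ and $0$ otherwise. This equals $C_n$ exactly because the columns of $C_n$ outside $I_{n+1}$ vanish, which you correctly established. Also $(SR)_{j,j'}=C_{jj'}$ for $j,j'\in I_{n+1}$, which uses only $I_{n+1}\subseteq I_n$. With this correction, chaining from $C_0=C$ gives $C\SSE C_N$, and the rest of your proof goes through as written.
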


\begin{proof}
Deleting the first edge of a path shows that $I_{n+1}(C)\subseteq I_n(C)$, so the sets eventually stabilize by finiteness of the graph $C$. Put $J=I_{n+1}(C)$. A vertex in $I_n(C)\setminus J$ receives no edge from a vertex in $I_n(C)$, and hence the corresponding column of $C_n$ is zero. Let $R$ be the $I_n(C) \times J$ corner of $C_n$ and take $S$ to be the coordinate-selection matrix given by $S_{j,u} = \delta_{j,u}$ for $j\in J$ and $u\in I_n(C)$. Then
\[
 RS=C_n,\ \ \ SR=C_{n+1},
\]
which is an elementary strong shift equivalence.

At a stable stage, say for $N\in \mathbb{N}$ such that $I_N(C) = I_{N+1}(C)$, every vertex of $G_{C_N}$ receives an edge from
$G_{C_N}$. It also emits an edge inside $G_{C_N}$. Indeed, if $u\in I$ and $e$ is an edge from $u$ to $w$ in $G_C$, then a path of length $N$ ending at $u$, followed by this edge, shows that $w\in I_{N+1}(C)=I$. Thus $C_N$ is essential, and the successive elementary equivalences give $C\SSE C_N$.

Finally, the blocks of $Q_N(G_C)$ and $Q_{N+1}(G_C)$ are both indexed by $I$. The proof of Proposition~\ref{prop:path-multiplicity} applies verbatim and shows that the restriction multiplicity from the $w$-block is indexed by edges from $v$ to $w$ with $v,w\in I$. This is exactly the desired conclusion.
\end{proof}

\begin{prp}
\label{prop:source-tolerant}
Let $A$ be a finite essential adjacency matrix and let $C$ be a finite adjacency matrix with no zero rows. If $\Lk{G_A}\cong_{\mathrm{gr},k}\Lk{G_C}$ then $A\SSE C$.
\end{prp}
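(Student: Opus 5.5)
Rerun the argument of Theorem~\ref{thm:isomorphism-rigidity} verbatim with $B$ replaced by $C$. The setup of Section~\ref{sec:isomorphism}, and Lemmas~\ref{lem:edge-enlargement} and~\ref{lem:finite-chain}, were already stated for a second matrix with no zero rows. This produces a chain of elementary strong shift equivalences from $A_0=A$ to
\[
 A_N=\Mmult{Q_n(G_C)}{Q_{n+1}(G_C)}.
\]
Only the right endpoint needs new work. Since $G_C$ may have sources, blocks of $Q_n(G_C)$ can be absent, so $A_N$ is not $C$ itself but its restriction to the vertices that support blocks. We then finish with Lemma~\ref{lem:essential-core}.

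The first step is to make $n$ large enough, and this costs nothing. In the choice \eqref{eq:choose-n}, the membership $p_u\in Q_n(G_C)$, $x_e\in H_n^+(G_C)$, $y_e\in H_n^-(G_C)$ persists when $n$ is increased, because $Q_n\subseteq Q_{n+1}$ and $H_n^\pm\subseteq H_{n+1}^\pm$. So we may assume $n\geq N_0$, where $I_{N_0}(C)=I_{N_0+1}(C)=I$ is the stable set of Lemma~\ref{lem:essential-core}. Then $I_n(C)=I_{n+1}(C)=I$. The blocks of $Q_n(G_C)$ and of $Q_{n+1}(G_C)$ are both indexed by $I$, because $z^{(m)}_v\neq 0$ exactly when $v\in I_m(C)$. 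The final assertion of Lemma~\ref{lem:essential-core}, applied at $n$ (valid for any stable index), gives
\[
 \Mmult{Q_n(G_C)}{Q_{n+1}(G_C)}=C_n=C_{N_0},
\]
computed with the terminal-vertex labels.

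The second step is to check that the full-corner labels of Lemma~\ref{lem:edge-enlargement}(ii) agree with these terminal-vertex labels. This is the one place where the earlier proof must be reread. As before,
\[
 \Theta_C(z_v^{(n)})=s z_v^{(n)} t=z_v^{(n+1)}q,
\]
by \eqref{eq:path-block-intertwining}, which holds here because only the product rules of Lemma~\ref{lem:path-pair} are used. Fullness of $q$ in $Q_{n+1}(G_C)$ forces $z_v^{(n+1)}q\neq 0$ for every $v\in I$. Since the blocks on both sides are indexed by the same set $I$, the labelling is the identity on $I$. I expect this to be the main obstacle. One must make sure that no block of $Q_{n+1}(G_C)$ is missing relative to $Q_n(G_C)$, since otherwise the corner isomorphism could not match blocks bijectively. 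Stabilization of $I_n(C)$ handles exactly this.

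Putting the steps together gives
\[
 A\SSE A_N=C_{N_0}\SSE C,
\]
where the last equivalence is the statement $C\SSE C_{N_0}$ in Lemma~\ref{lem:essential-core}. By transitivity, $A\SSE C$.
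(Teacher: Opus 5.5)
Your proposal is correct and follows the paper's proof. The paper also chooses $n$ so that \eqref{eq:choose-n} holds and $I_n(C)=I_{n+1}(C)$, reruns the finite-chain construction to get $A\SSE A_N=\Mmult{Q_n(G_C)}{Q_{n+1}(G_C)}=C_n$ via Lemma~\ref{lem:essential-core}, and concludes with $C\SSE C_n$. Your explicit check that the full-corner labels agree with the terminal-vertex labels is left implicit in the paper, which simply refers back to the proof of Theorem~\ref{thm:isomorphism-rigidity}. In fact the identity $\Theta_C(z_v^{(n)})=z_v^{(n+1)}q$, together with injectivity of $\Theta_C$, already forces $I_n(C)=I_{n+1}(C)$ once \eqref{eq:choose-n} holds, so the blocks can never be mismatched.
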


\begin{proof}
Let $\varphi:\Lk{G_A}\to\Lk{G_C}$ be a graded $k$-algebra isomorphism. Choose $n$ large enough so that all image vertices lie in $Q_n(G_C)$, all image edges and ghosts lie in $H_n^+(G_C)$ and $H_n^-(G_C)$, respectively, and $I_n(C)=I_{n+1}(C)$. Run the construction of the previous section with the fixed algebra $Q_n(G_C)$. The proof of Lemma \ref{lem:edge-enlargement} uses only essentiality of $A$, while the proof of Lemma \ref{lem:finite-chain} uses only the $Q_n(G_C)$ product
relations, which remain valid for $G_C$.

The initial endpoint is still $A$. At the other endpoint,
Lemma~\ref{lem:essential-core} gives
\[
 A_N=\Mmult{Q_n(G_C)}{Q_{n+1}(G_C)}=C_n.
\]
Thus $A\SSE C_n$. Since by the same lemma we have $C\SSE C_n$, we get that $A\SSE C$.
\end{proof}

Suppose $B$ is a finite essential matrix. Let $\mathbf m=(m_v)_{v\in G_B^0}$ be positive integers. Define a matrix
$B^{\mathbf m}$ whose rows and columns are indexed by the set $\{v_i:v\in G_B^0,\ 1\leq i\leq m_v\}$ by specifying $(B^{\mathbf m})_{v_i,w_j}=B_{vw}\delta_{j,1}$. Equivalently, for every edge $f$ from $v$ to $w$ of $G_B$ and every $1\leq i\leq m_v$, the graph $G_{B^{\mathbf m}}$ has an edge $f_i$ from $v_i$ to $w_1$. Thus the row of $v$ is repeated $m_v$ times, while every repeated edge enters the first copy of its range. The graph $G_{B^{\mathbf m}}$ has no sinks, but it may have sources. Let $m=\sum_vm_v$ and define
\begin{equation}\label{eq:row-corner-idempotent}
 D_{\mathbf m}=\bigoplus_{v\in G_B^0} \operatorname{diag}
 \big(\underbrace{v,\ldots,v}_{m_v\text{ times}}\big)
 \in M_m(\Lk{G_B}).
\end{equation}

\begin{lem}\label{lem:row-repetition}
There is a graded $k$-algebra isomorphism $\Lk{G_{B^{\mathbf m}}}\cong_{\mathrm{gr},k} D_{\mathbf m}M_m(\Lk{G_B})D_{\mathbf m}$, and $B^{\mathbf m}\ESSE B$.
\end{lem}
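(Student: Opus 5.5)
The statement has two parts: the strong shift relation $B^{\mathbf m}\ESSE B$, and the graded isomorphism of $\Lk{G_{B^{\mathbf m}}}$ with a corner of $M_m(\Lk{G_B})$. I would dispose of the matrix part first, since it is a direct factorization.

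\textbf{The matrix factorization.} Let $R$ be the $\{v_i\}\times G_B^0$ matrix with $R_{v_i,w}=B_{vw}$. Let $S$ be the $G_B^0\times\{v_i\}$ matrix with $S_{w,u_j}=\delta_{w,u}\delta_{j,1}$. Then
\[
(RS)_{v_i,w_j}=\sum_{u} B_{vu}\,\delta_{u,w}\delta_{j,1}=B_{vw}\delta_{j,1}=(B^{\mathbf m})_{v_i,w_j},
\]
and
\[
(SR)_{w,u}=\sum_{v,i}\delta_{w,v}\delta_{i,1}B_{vu}=B_{wu}.
\]
Hence $B^{\mathbf m}=RS$ and $B=SR$.

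\textbf{The algebra map.} Write $E_{ij}$ for matrix units in $M_m$, and index the diagonal slots of $D_{\mathbf m}$ by the pairs $v_i$. I define $\psi:\Lk{G_{B^{\mathbf m}}}\to D_{\mathbf m}M_m(\Lk{G_B})D_{\mathbf m}$ on generators by
\[
\psi(v_i)=v\,E_{v_i,v_i},\qquad \psi(f_i)=f\,E_{v_i,w_1},\qquad \psi(f_i^*)=f^*E_{w_1,v_i},
\]
where $f$ runs over edges from $v$ to $w$. These assignments have degrees $0,1,-1$.

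I then check the Cuntz--Krieger relations. Each vertex $v_i$ emits the edges $f_i$ with $s(f)=v$, and there are such edges because $B$ has no zero rows. So
\[
\sum_{s(f)=v} f f^* E_{v_i,v_i}=vE_{v_i,v_i}.
\]
Also $\psi(f_i^*)\psi(g_j)=f^*g\,E_{w_1,v_i}E_{v_j,\cdot}$. This vanishes unless $i=j$ and $f=g$ (same source $v$), in which case it equals $r(f)E_{w_1,w_1}=\psi(w_1)$. The sources $v_i$ with $i>1$ impose no relation on the range side. Universality of the Leavitt path algebra then gives a graded homomorphism $\psi$.

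\textbf{Surjectivity.} The corner is spanned by elements $\alpha\beta^*E_{v_i,u_j}$ with $s(\alpha)=v$ and $s(\beta)=u$. Take $|\alpha|,|\beta|\ge1$ and write $\alpha=f\alpha'$. Then
\[
\alpha\beta^* E_{v_i,u_j}=\psi(f_i)\,\bigl(\alpha'\beta'^{*}E_{r(f)_1,r(g)_1}\bigr)\,\psi(g_j^*),
\]
with the middle factor supported on first copies. On first copies, $\psi$ of a path in $G_{B^{\mathbf m}}$ staying in first copies reproduces any path of $G_B$. So every such element is in the image. The remaining cases have $|\alpha|=0$ or $|\beta|=0$. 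I reduce these to the previous case by expanding the vertex $v=\sum_{s(f)=v}ff^*$, which is legitimate since $B$ has no sinks.

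\textbf{Injectivity.} I would invoke the graded uniqueness theorem for Leavitt path algebras: a graded homomorphism that is nonzero on every vertex is injective. Here $\psi(v_i)=vE_{v_i,v_i}\neq0$ for every vertex $v_i$. This applies even though $G_{B^{\mathbf m}}$ may have sources, so $\psi$ is a graded isomorphism.

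\textbf{Main obstacle.} I expect the only real care to lie in surjectivity. The point to check is that every matrix-unit position, including the non-first copies $v_i$ with $i>1$ which receive no edges, is reached by peeling off a first edge or ghost edge.
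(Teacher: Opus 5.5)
Your proposal is correct and follows the paper's proof essentially step for step: the same generator assignment, the graded uniqueness theorem for injectivity, and the same matrix factorization with the roles of $R$ and $S$ swapped. The only difference is in surjectivity, and it is cosmetic. You peel off a first edge $f_i$ and a first ghost edge $g_j^*$ directly, whereas the paper first forms the matrix units $E_{v_i,v_j}v=\sum_{s(f)=v}\Psi(f_i)\Psi(f_j^*)$ and then multiplies the first-copy elements $E_{v_1,w_1}b$ by them, which amounts to the same computation.
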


\begin{proof}
Index the rows and columns of $M_m(\Lk{G_B})$ by the vertices $v_i$. Define for vertices $v_i$ and edges $f_i$ with $s(f_i) = v_i$ and $r(f_i) = w_1$,
\begin{align*}
 v_i&\longmapsto E_{v_i,v_i}v,\\
 f_i&\longmapsto E_{v_i,w_1}f,\\
 f_i^*&\longmapsto E_{w_1,v_i}f^*.
\end{align*}
These elements satisfy the Cuntz--Krieger relations for
$G_{B^{\mathbf m}}$, so they define a graded homomorphism $\Psi:\Lk{G_{B^{\mathbf m}}} \longrightarrow D_{\mathbf m}M_m(\Lk{G_B})D_{\mathbf m}$. This homomorphism is injective by the graded uniqueness theorem, since every vertex has nonzero image.

For surjectivity, observe that for $1\leq i,j\leq m_v$,
\begin{equation}\label{eq:row-matrix-units}
 \sum_{s(f)=v}\Psi(f_i)\Psi(f_j^*)=E_{v_i,v_j}v.
\end{equation}
The first copies $v_1$ and the edges $f_1$ generate a copy of
$\Lk{G_B}$ in the copy of the first corner. Hence the image contains
$E_{v_1,w_1}b$ for every $b\in v \Lk{G_B} w$. Multiplying by the matrix units in equation \eqref{eq:row-matrix-units} gives $E_{v_i,w_j}b$ for all $i,j$ and all $b\in v\Lk{G_B} w$. These elements span the corner.

Next, we show $B^{\mathbf m}\ESSE B$. Define rectangular matrices $R,S$ by setting $R_{v,w_j}=\delta_{v,w}\delta_{j,1}$ and $S_{v_i,w}=B_{vw}$. Then
\[
 RS=B,
 \ \ \text{and} \ \ 
 (SR)_{v_i,w_j}=B_{vw}\delta_{j,1}
                 =(B^{\mathbf m})_{v_i,w_j}.
\]
Thus, $B^{\mathbf m}\ESSE B$.
\end{proof}

For a graded ring $D$, put $Z_0(D)=Z(D)\cap D_0$, the center of $D$ in $D_0$.

\begin{lem}\label{lem:center}
Let $B$ be a finite essential adjacency matrix. For each connected component $W$ of the underlying undirected graph of $G_B$, put $c_W=\sum_{v\in W}v$. Then
\begin{equation*}
 Z_0(\Lk{G_B})=\bigoplus_Wkc_W.
\end{equation*}
If $e\in M_d(\Lk{G_B}_0)$ is full in $M_d(\Lk{G_B}_0)$, then
\begin{equation*}
 Z_0(eM_d(\Lk{G_B})e)
   =\bigoplus_Wk e(c_WI_d).
\end{equation*}
\end{lem}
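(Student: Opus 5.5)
We first compute $Z_0(\Lk{G_B})$ and then transfer the answer to the corner $eM_d(\Lk{G_B})e$ by Morita invariance of the center.

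\emph{The easy inclusion.} Each $c_W$ is a degree-zero idempotent. It commutes with every vertex. It also commutes with every edge and ghost edge, since $s(f)$ and $r(f)$ lie in the same component, so that $c_Wf=f=fc_W$ when $f$ lies in $W$ and both products vanish otherwise. Hence $\bigoplus_W kc_W\subseteq Z_0$.

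\emph{The reverse inclusion.} Let $z\in Z_0$. Since $z$ commutes with every vertex, $z=\sum_v vzv$. By Lemma~\ref{lem:path-pair} we have $z\in Q_n(G_B)$ for some $n$. Because $z$ commutes with all of $Q_n(G_B)\cong\bigoplus_v M_{|G^nv|}(k)$, it lies in the center of this split semisimple algebra, so
\[
 z=\sum_v\lambda_v z_v^{(n)} .
\]
Passing to $Q_{n+1}$ via equation \eqref{eq:path-extension}, this becomes
\[
 z=\sum_w\Big(\sum_{\alpha f\in G^{n+1}w}\lambda_{r(\alpha)}\,\alpha f(\alpha f)^*\Big).
\]
Centrality in $Q_{n+1}(G_B)$ forces this element to be scalar on each $w$-block. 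Hence $\lambda_{s(f)}=\lambda_{r(f)}$ for every edge $f$; we need $B$ essential here so that every block is nonempty and every edge occurs as a last edge. It follows that $\lambda$ is constant on connected components, and therefore $z=\sum_W\lambda_W c_W$, since $\sum_{v\in W}z_v^{(n)}=c_W$.

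\emph{The corner.} For $e$ full in $M_d(\Lk{G_B}_0)$, use the standard fact that for a full idempotent $e$ in a ring $R$, the map $Z(R)\to Z(eRe)$, $z\mapsto ez$, is an isomorphism. It is injective: if $ez=0$, then $z\in\mathrm{ann}(ReR)=0$. For surjectivity, a central element $c$ of $eRe$ extends to $\sum_i a_i c b_i$ using the relation $1=\sum_i a_ieb_i$. We apply this with $R=M_d(\Lk{G_B})$. Here fullness holds in $R$ because $e$ is already full in the subring $R_0=M_d(\Lk{G_B}_0)$. Since $e$ and the $a_i,b_i$ can be chosen in degree zero, the isomorphism preserves degrees. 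This gives
\[
 Z_0(eRe)=e\,Z_0(M_d(\Lk{G_B})) .
\]
Finally, $Z_0(M_d(\Lk{G_B}))=Z_0(\Lk{G_B})I_d$.

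\emph{Main obstacle.} The delicate step is the surjectivity in this Morita argument. One has to check that the extension $\sum_i a_icb_i$ is independent of the chosen decomposition $1=\sum_i a_ieb_i$ and that it is central. The route I would take is to show first that it commutes with every element $r\in R$, by writing $r=\sum_{i,j}a_i(eb_irea_j... )b_j$-style expansions using $1=\sum a_ieb_i$ on both sides of $r$, so that $r$ is expressed through elements of $eRe$ sandwiched between the $a_i$ and $b_j$. Once centrality is established, checking that its product with $e$ recovers $c$ is a direct computation.
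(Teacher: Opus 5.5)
There is a genuine gap, in the computation of $Z_0(\Lk{G_B})$. Your corner argument is fine and matches the paper's: the full-corner isomorphism $Z(R)\to Z(eRe)$, $z\mapsto ez$, with inverse $c\mapsto\sum_ia_icb_i$, preserves degree zero because $e,a_i,b_i$ have degree zero. The centrality check you single out as the main obstacle is routine: write $r\tilde c=\sum_{i,j}a_j(eb_jra_ie)\,c\,b_i$ and use that $eb_jra_ie\in eRe$ commutes with $c$.

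The problem is the step after $z=\sum_v\lambda_vz_v^{(n)}$. Re-expanding at level $n+1$, centrality in $Q_{n+1}(G_B)$ only gives $z=\sum_w\mu_wz_w^{(n+1)}$ with $\lambda_{s(f)}=\mu_{r(f)}$ for every edge $f$. That is, $\lambda$ is constant on the sources of the edges entering a fixed $w$. Nothing relates the level-$(n+1)$ scalar $\mu_w$ of the $w$-block to the level-$n$ scalar $\lambda_w$. So ``$\lambda_{s(f)}=\lambda_{r(f)}$'' does not follow, and passing to higher levels does not help.

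The reason is that up to this point you only use that $z$ commutes with $\Lk{G_B}_0$, and $Z(\Lk{G_B}_0)$ can be strictly larger than $\bigoplus_Wkc_W$. Take the $2$-cycle $B=\left(\begin{smallmatrix}0&1\\1&0\end{smallmatrix}\right)$ with edges $g\colon u\to v$ and $g'\colon v\to u$. Every $Q_n(G_B)$ equals $ku\oplus kv$, so $\Lk{G_B}_0$ is commutative. Then $z=u$ passes every test in your argument: with $n=0$ you get $\lambda_u=1\neq0=\lambda_v$, and all blocks are $1\times1$. Yet $u\notin k1$, since $ug=g\neq0=gu$.

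The missing ingredient is commutation with elements of nonzero degree; this is where $z\in Z(\Lk{G_B})$, not merely $z\in Z(\Lk{G_B}_0)$, must be used. The paper does this as follows. For each $w$, choose a nonzero $h=\alpha\beta^*\in H_n^+(G_B)$ with $|\alpha|=n+1$, $|\beta|=n$ and $r(\alpha)=r(\beta)=w$; this exists because $G_B$ has no sources. The intertwining $hz_v^{(n)}=z_v^{(n+1)}h$ gives $hz=\lambda_wh$, while $zh=\mu_wh$, so $\lambda_w=\mu_w$. Your block-scalar observation (which the paper obtains by multiplying by $\gamma f(\gamma f)^*$) then yields $\lambda_{s(f)}=\mu_{r(f)}=\lambda_{r(f)}$ for every edge $f$. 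With that, the rest of your argument goes through.
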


\begin{proof}
Each $c_W$ is central. Conversely, let $z\in Z_0(\Lk{G_B})$, and choose $n$ with $z\in Q_n(G_B)$. Since $z$ is central in both $Q_n(G_B)$ and
$Q_{n+1}(G_B)$, there are scalars $\lambda_v,\mu_v\in k$ such that
\begin{equation}\label{eq:center-two-stages}
 z=\sum_v\lambda_vz_v^{(n)}
  =\sum_v\mu_vz_v^{(n+1)}.
\end{equation}
For each $w$, choose a nonzero element of $H_n^+(G_B)$ whose length-$n$
and length-$(n+1)$ sides both end at $w$. Centrality and equation
\eqref{eq:path-block-intertwining} yield $\lambda_w=\mu_w$. If
$f$ is an edge from $v$ to $w$, choose a path $\gamma$ of length $n$ ending at $v$ and multiply equation \eqref{eq:center-two-stages} by the nonzero idempotent $\gamma f(\gamma f)^*$. This yields $\lambda_v=\mu_w=\lambda_w$. Thus the scalars are constant on each connected component. Since
$\sum_{v\in W}z_v^{(n)}=c_W$, the description of $Z_0(\Lk{G_B})$ follows.

Next, denote $R=M_d(\Lk{G_B})$. The standard center isomorphism $Z(R)\longrightarrow Z(eRe)$ for a full corner is given by $z\longmapsto ez$, and preserves degree zero. Indeed, if
$\sum_i a_i e b_i=1$ with $a_i,b_i\in R_0$, the inverse sends
$t\in Z(eRe)$ to $\sum_i a_itb_i$, which has degree zero whenever $t$
does. Since $Z(R)=Z(\Lk{G_B})I_d$, the descriptio of $Z_0(eM_d(\Lk{G_B})e)$ follows from the already established description of $Z_0(\Lk{G_B})$. Fullness ensures that every $e(c_WI_d)$ is nonzero.
\end{proof}

\begin{prp}
\label{prop:corner-replacement}
Let $A$ and $B$ be finite essential adjacency matrices, and let
$e\in M_d(\Lk{G_B}_0)$ be full in $M_d(\Lk{G_B}_0)$. If there is a
graded ring isomorphism $\varphi:\Lk{G_A}\longrightarrow eM_d(\Lk{G_B})e$, then there are positive integers $m_v$ for each $v\in G_B^0$ which yield a graded $k$-algebra isomorphism $\Lk{G_A}\cong_{\mathrm{gr},k}\Lk{G_{B^{\mathbf m}}}$.
\end{prp}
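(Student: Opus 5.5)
Plan: we replace $e$ by a conjugate diagonal idempotent, and then use the resulting structure to see the corner as the Leavitt path algebra of $G_{B^{\mathbf m}}$. Three steps: diagonalize $e$, make $\varphi$ $k$-linear, identify the corner via Lemma~\ref{lem:row-repetition}.

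\emph{Diagonalization.} Since $e$ has finitely many entries in $\Lk{G_B}_0=\bigcup_n Q_n(G_B)$, choose $n$ with $e\in M_d(Q_n(G_B))$. By Lemma~\ref{lem:path-pair}(iii), $M_d(Q_n(G_B))\cong\bigoplus_v M_{d|G^nv|}(k)$ is split semisimple. So $e$ is conjugate by a unit $g$ of $M_d(Q_n(G_B))$ (a degree-zero unit) to a sum of minimal idempotents. For each $v$ we take these to be $m_v$ diagonal matrix units $E_{ii}\gamma\gamma^*$ with $\gamma$ ranging over paths of length $n$ ending at $v$, with repetitions allowed across the $d$ diagonal slots. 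Here $m_v$ is the rank of $e$ in the $v$-block. Fullness of $e$ in $M_d(\Lk{G_B}_0)$ should force $m_v\ge1$ for every $v$. Indeed, $z_v^{(n)}I_d$ lies in the ideal generated by $e$, and one argues within the finite stage $Q_{n'}$ for $n'\ge n$ large, using that the $v$-block of $Q_n$ sits inside the $Q_{n'}$ blocks. This needs care: a block of $Q_n$ could become covered at later stages. If that happens, I would instead enlarge $n$ until the degree-zero fullness relation $\sum a_i e b_i=1$ also lies in $M_d(Q_n)$, which gives $m_v\ge1$ at that stage. Conjugation by $g$ is a graded ring isomorphism of corners.

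Next, $\gamma\gamma^*=\gamma v\gamma^*$, and $\gamma$ is a homogeneous partial isometry with $\gamma^*\gamma=v$. So conjugating with the diagonal matrix $\operatorname{diag}(\gamma_i^*)$, and its adjoint $\operatorname{diag}(\gamma_i)$, gives a graded isomorphism
\[
eM_d(\Lk{G_B})e \cong D_{\mathbf m}M_m(\Lk{G_B})D_{\mathbf m}
\]
after discarding the zero rows and columns. Here the element $u=\sum_i E_{i,\sigma(i)}\gamma_i^*$ has degree $-n$. Conjugation $x\mapsto uxu^*$ still preserves the $\mathbb Z$-grading, since the degrees cancel. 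Composing with Lemma~\ref{lem:row-repetition} gives a graded ring isomorphism $\psi:\Lk{G_A}\to\Lk{G_{B^{\mathbf m}}}$.

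\emph{Scalar normalization (main obstacle).} The isomorphism $\psi$ is only a ring isomorphism, and $\psi(k1)$ lies in $Z_0$. By Lemma~\ref{lem:center}, applied first to the corner and then transported, we have $Z_0=\bigoplus_W k\,e(c_W I_d)$. Thus on each component $W$, $\psi|_k$ composed with projection gives a field embedding $\sigma_W:k\to k$. These embeddings must be surjective, because $k1$ maps onto the degree-zero center after summing over components of $G_A$. I would first reduce to connected components: central idempotents $c_W$ of degree zero correspond on both sides, so it suffices to treat each component separately. On one component, $\psi\circ\sigma^{-1}$ fails to be $k$-linear only by $\sigma$. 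To fix this, precompose with the ring automorphism of $\Lk{G_{B^{\mathbf m}}}$ induced by applying $\sigma^{-1}$ to coefficients in the path basis. This automorphism exists by the universal property, since the relations have integer coefficients, and it is graded. The result is a graded $k$-algebra isomorphism.

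The delicate points are checking that each $\sigma_W$ is an automorphism of $k$, and ensuring that the coefficient twist is compatible with the conjugation steps. I would perform the twist after the diagonalization, where everything is expressed in the canonical path basis.
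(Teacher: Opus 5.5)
Your proposal is correct and follows the paper's proof. You diagonalize $e$ at a stage $M_d(Q_t(G_B))$ that also contains a degree-zero fullness relation, which is what forces every $m_v\geq 1$. You then untwist the scalar action componentwise using Lemma~\ref{lem:center} and coefficientwise field automorphisms. Finally, you conjugate by the path partial isometries to reach $D_{\mathbf m}M_m(\Lk{G_B})D_{\mathbf m}$ and apply Lemma~\ref{lem:row-repetition}. The only difference is that you do the scalar correction at the end on $\Lk{G_{B^{\mathbf m}}}$, rather than on the corner $qM_d(\Lk{G_B})q$ as the paper does. This is immaterial because the other isomorphisms are $k$-linear; note that you must postcompose, not precompose, with $b\mapsto\sum_W c'_W\tau_{\sigma_W^{-1}}(b)$.
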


\begin{proof}
Choose a finite fullness relation $\sum_j a_jeb_j=1$ for $a_j,b_j\in M_d(\Lk{G_B}_0)$. Since $\Lk{G_B}_0=\bigcup_tQ_t(G_B)$, there is $t$ such that $e$ and all the
$a_j,b_j$ belong to $M_d(Q_t(G_B))$. Thus $e$ is full in the
finite-dimensional semisimple algebra $M_d(Q_t(G_B))$. Since $B$ is essential, we have
\[
 Q_t(G_B)\cong\bigoplus_{v\in G_B^0}M_{|G_B^tv|}(k).
\]
Let $m_v$ be the rank of the $v$-block of $e$ in $M_d(Q_t(G_B))$.
Fullness gives $m_v>0$ for every $v \in G_B^0$. With $m=\sum_vm_v$, diagonalizing the idempotent separately in each matrix block, we find an invertible $g\in M_d(Q_t(G_B))$ such that
\begin{equation*}
 q=geg^{-1}
   =\sum_{j=1}^{m}E_{a_j,a_j}\alpha_j\alpha_j^*,
\end{equation*}
where $|\alpha_j|=t$ for all $j$, the pairs $(a_j,\alpha_j)$ are distinct, and $v$ occurs exactly $m_v$ times among the vertices $r(\alpha_j)$. Conjugation by $g$ is degree preserving, so after replacing $e$ by $q$ and $\varphi$ by $\operatorname{Ad}(g)\circ\varphi$, we may assume that $e=q$. 

The diagonal idempotent $q$ is fixed by every field automorphism induced on $M_d(\Lk{G_B})$. For a connected component $U$ of the underlying undirected graph of $G_A$ write $c_U^A = \sum_{u\in U}u$, and define $c_W^B$ similarly for $G_B$. By Lemma~\ref{lem:center} the primitive central idempotents in the degree-zero centers of the source and target are, respectively, $c_U^A$ and $q(c_W^B I_d)$. Hence, $\varphi$ induces a bijection between connected components $U$ of $G_A$ and connected components $W$ of $G_B$. For each matched pair, the restriction of $\varphi$ to $k c_U^A$ is onto $k q(c_W^B I_d)$ and is a unital ring isomorphism. Consequently, there is a field automorphism $\sigma_W \in \Aut(k)$ such that $\varphi(\lambda c_U^A) = \sigma_W(\lambda)q(c_W^B I_d)$ for every $\lambda \in k$. Summing over the components gives
\begin{equation}\label{eq:scalar-action}
 \varphi(\lambda1)
   =\sum_W\sigma_W(\lambda)q(c^B_WI_d).
\end{equation}
Let $\tau_{\sigma_W}$ denote the coefficientwise ring automorphism of
$M_d(\Lk{G_B})$ induced by $\sigma_W$ which fixes all graph generators, for each component $W$. Since $q$ and $c_W^B I_d$ are sums of matrices of path idempotents with coefficients $1$, they must therefore fixed by the automorphism $\tau_{\sigma_W}$. Therefore, for $b\in qM_d(\Lk{G_B})q$ the map
\begin{equation}\label{eq:scalar-correction}
 \Theta(b)=\sum_W q(c^B_WI_d)\tau_{\sigma_W^{-1}}(b),
\end{equation}
is a graded ring automorphism of the corner. Equations
\eqref{eq:scalar-action} and \eqref{eq:scalar-correction} therefore yield $\Theta\varphi(\lambda1)=\lambda q$. Replacing $\varphi$ by $\Theta\circ\varphi$, we may therefore assume that $\varphi$ is $k$-linear.

Now define $D=\operatorname{diag}(r(\alpha_1),\ldots,r(\alpha_m))\in M_m(\Lk{G_B})$, as well as rectangular matrices $T\in M_{d\times m}(\Lk{G_B})$ and
$U\in M_{m\times d}(\Lk{G_B})$ by $T_{a_j,j}=\alpha_j$ and $U_{j,a_j}=\alpha_j^*$, with all other entries zero. Since the paths have the same length and the pairs $(a_j,\alpha_j)$ are distinct, we get that $UT=D$ and $TU=q$. Every nonzero entry of $T$ has degree $t$, and every nonzero entry of $U$
has degree $-t$. Hence we get a graded $k$-algebra isomorphism $DM_m(\Lk{G_B})D \longrightarrow qM_d(\Lk{G_B})q$ given by $z\longmapsto TzU$, with inverse given by $w\mapsto UwT$. The diagonal idempotent $D$ contains the vertex $v$ exactly $m_v$ times, so $D=D_{\mathbf m}$ in the notation of equation \eqref{eq:row-corner-idempotent}. Lemma~\ref{lem:row-repetition} now identifies $DM_m(\Lk{G_B})D$ with $\Lk{G_{B^{\mathbf m}}}$. Composing the inverse our isomorphism and the isomorphism with $\varphi$ gives the required graded $k$-algebra isomorphism.
\end{proof}

We now arrive at the main result of our paper.

\begin{thm}\label{thm:main-classification}
Let $A$ and $B$ be finite essential adjacency matrices, and let $k$ be a field. $L_k(G_A)$ and $L_k(G_B)$ are graded Morita equivalent if and only if $A$ and $B$ are strong shift equivalent.
\end{thm}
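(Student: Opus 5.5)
The proof splits into the two implications, and most of the work has already been done in Section~\ref{sec:isomorphism} and the preceding lemmas of this section.

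\emph{Graded Morita equivalence implies strong shift equivalence.} Suppose $\Lk{G_A}$ and $\Lk{G_B}$ are graded Morita equivalent. I would first invoke the homogeneous graded Morita theorem of Abrams--Ruiz--Tomforde \cite{ARTMorita}. It produces $d\geq 1$, a full idempotent $e\in M_d(\Lk{G_B}_0)$, and a graded ring isomorphism $\varphi:\Lk{G_A}\to eM_d(\Lk{G_B})e$.

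Two points need checking here. The first is that the idempotent can be taken in degree zero and full in $M_d(\Lk{G_B}_0)$, not merely full in $M_d(\Lk{G_B})$. Since $B$ is essential, $\Lk{G_B}$ is strongly graded, so graded modules are controlled by degree-zero data via Dade's theorem. This is what lets one put $e$ in degree zero and have it full in $M_d(\Lk{G_B}_0)$. The second point is that only a graded ring isomorphism is available, not a $k$-linear one. This is precisely what Proposition~\ref{prop:corner-replacement} absorbs through its scalar correction.

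Applying Proposition~\ref{prop:corner-replacement}, we obtain positive integers $\mathbf m$ and a graded $k$-algebra isomorphism $\Lk{G_A}\cong_{\mathrm{gr},k}\Lk{G_{B^{\mathbf m}}}$. The matrix $B^{\mathbf m}$ has no zero rows, because each row repeats a row of the essential matrix $B$. It may, however, have zero columns. Proposition~\ref{prop:source-tolerant} applies nonetheless, with $C=B^{\mathbf m}$, and gives $A\SSE B^{\mathbf m}$. Lemma~\ref{lem:row-repetition} gives $B^{\mathbf m}\ESSE B$. Hence $A\SSE B$.

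\emph{Strong shift equivalence implies graded Morita equivalence.} Graded Morita equivalence is an equivalence relation, so it suffices to treat an elementary strong shift equivalence $A=RS$, $B=SR$ between essential matrices.

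Consider the bipartite graph $G$ whose adjacency matrix is the block matrix $\begin{pmatrix}0&R\\ S&0\end{pmatrix}$ on $V_A\sqcup V_B$. Since $A$ and $B$ are essential, $R$ and $S$ have no zero rows or columns, so $G$ has no sinks or sources.

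Let $p_A$ and $p_B$ denote the sums of the vertices in $V_A$ and in $V_B$. These are complementary full homogeneous idempotents in $\Lk{G}_0$. To see fullness, note that every vertex of $V_B$ receives an edge from $V_A$, so it lies in the ideal generated by $p_A$, and symmetrically for $p_A$.

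I would then identify the corner $p_A\Lk{G}p_A$ with $\Lk{G_A}$ after the regrading $\deg\mapsto\deg/2$. The map sends the edge of $G_A$ corresponding to a factorization $(r,s)$ to the length-two path $rs$; this is the standard argument that paths of length two through the bipartition realize the Leavitt path algebra of $RS$. The corner $p_B\Lk{G}p_B$ is identified with $\Lk{G_B}$ in the same way.

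The main obstacle is the grading. In the bipartite graph, the corners are concentrated in even degrees, so the honest statement is that $\Lk{G_A}$ and $\Lk{G_B}$ are graded Morita equivalent via a bimodule carrying the halved grading. To handle this, I would instead use the $\mathbb Z$-grading on $\Lk{G}$ given by $\deg r=1$ and $\deg s=0$. The Cuntz--Krieger relations are homogeneous for any assignment of integer degrees to edges, so this is a valid grading. The identifications of the corners $p_A\Lk{G}p_A$ and $p_B\Lk{G}p_B$ with $\Lk{G_A}$ and $\Lk{G_B}$ then become graded isomorphisms for the standard gradings.

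Full homogeneous idempotents of degree zero in $\Lk{G}_0$ (with respect to this new grading) give graded Morita equivalences of the corners with $\Lk{G}$, provided fullness holds in degree zero. I expect checking that degree-zero fullness to be the fiddly step. I would verify it through the relations $p_A=\sum rr^*$ and $p_B=\sum ss^*$, noting that each $rr^*$ and $ss^*$ has degree zero under the new grading.
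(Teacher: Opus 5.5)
Your forward direction (graded Morita equivalence $\Rightarrow A\SSE B$) is exactly the paper's argument: the Abrams--Ruiz--Tomforde theorem for strongly graded rings, then Proposition~\ref{prop:corner-replacement}, then Proposition~\ref{prop:source-tolerant} applied to $B^{\mathbf m}$ (which has no zero rows), then Lemma~\ref{lem:row-repetition}. For the converse the paper simply cites \cite[Proposition~15(2)]{HazratDynamics}. Your direct bipartite-graph argument is a genuinely different, more self-contained route. Its key device is sound: with $\deg r=1$, $\deg s=0$, the corners $p_A\Lk{G}p_A$ and $p_B\Lk{G}p_B$ are graded isomorphic to $\Lk{G_A}$ and $\Lk{G_B}$ with their standard gradings.

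\textbf{The gap.} The reduction ``it suffices to treat an elementary strong shift equivalence between essential matrices'' is unjustified. A chain $A=A_0\ESSE A_1\ESSE\cdots\ESSE A_n=B$ with essential endpoints need not pass through essential matrices. From $A=RS$ essential you only get that $R$ has no zero rows and $S$ has no zero columns, so $SR$ may have zero rows, zero columns, or isolated vertices.

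For example, $R=\begin{pmatrix}2&0\end{pmatrix}$ and $S=\begin{pmatrix}1&0\end{pmatrix}^{\transpose}$ give $RS=(2)$ and $SR=\operatorname{diag}(2,0)$. Then $\Lk{G_{SR}}\cong\Lk{G_{(2)}}\times k$, which is not graded Morita equivalent to the graded simple algebra $\Lk{G_{(2)}}$. In your construction the isolated vertex is not in the ideal generated by $p_A$, so the argument breaks at such steps.

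You need the extra lemma that strong shift equivalent essential matrices are strong shift equivalent through essential matrices. It is short:
\begin{itemize}
\item For $M=RS$, $N=SR$, let $E_M\subseteq V_M$ and $E_N\subseteq V_N$ be the vertices lying on bi-infinite paths of $G_M$ and $G_N$.
\item If $u,v\in E_M$ and $R_{uw}S_{wv}>0$, then $w\in E_N$, and symmetrically.
\item Hence $R'=R|_{E_M\times E_N}$ and $S'=S|_{E_N\times E_M}$ satisfy $R'S'=M|_{E_M}$ and $S'R'=N|_{E_N}$, both essential.
\item Applying this along the chain leaves the essential endpoints unchanged.
\end{itemize}
Alternatively, route through Williams' decomposition into state splittings and amalgamations, which stays within essential graphs.

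\textbf{A smaller point.} Your proposed check of degree-zero fullness fails for $p_B$. The relation $p_A=\sum rr^*$ passes through $p_B$ only via $r,r^*$, which have degree $\pm1$. Use instead $u=s^*s=s^*p_Bs$ for an $S$-edge $s$ with range $u$; such an edge exists because $S$ has no zero columns.

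In fact degree-zero fullness is more than you need. If $e\in R_0$ is an idempotent with $ReR=R$, then $M\mapsto Me$ is already an equivalence $\operatorname{Gr}\text{-}R\to\operatorname{Gr}\text{-}eRe$ commuting with shifts, with inverse $N\mapsto N\otimes_{eRe}eR$.
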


\begin{proof}
That $A\SSE B$ implies that $L_k(G_A)$ and $L_k(G_B)$ are graded Morita equivalent is a known construction, which can be found for instance in \cite[Proposition~15(2)]{HazratDynamics}.

Leavitt path algebras associated to finite essential matrices are strongly graded. Abrams--Ruiz--Tomforde show that, for strongly graded rings, graded equivalence is the same as homogeneous graded equivalence \cite[Proposition~3.5 and Corollary~3.6]{ARTMorita}. Their homogeneous graded Morita theorem \cite[Theorem~4.7]{ARTMorita} gives an integer $d\geq 1$, and an idempotent $e\in M_d(\Lk{G_B}_0)$ which is full in this degree-zero algebra, so that we have a graded ring isomorphism $\Lk{G_A}\cong_{\mathrm{gr}}eM_d(\Lk{G_B})e$. Proposition~\ref{prop:corner-replacement} and the second part of Lemma \ref{lem:row-repetition} give positive integers
$\mathbf m=(m_v)$ and a graded $k$-algebra isomorphism $\Lk{G_A}\cong_{\mathrm{gr},k}\Lk{G_{B^{\mathbf m}}}$ so that $B^{\mathbf m}\ESSE B$. The graph $G_{B^{\mathbf m}}$ has no sinks. Therefore Proposition~\ref{prop:source-tolerant} gives
$A\SSE B^{\mathbf m}$, and hence $A\SSE B$.
\end{proof}

\section{Hazrat's graded classification conjectures are false} \label{sec:conjectures}

For a finite essential adjacency matrix $A$, and write $u_A=[\Lk{G_A}]\in\Kgr(\Lk{G_A})$. We use graded right modules and the suspension convention $M(1)_j=M_{j+1}$, with $x[M]=[M(1)]$. The ordered dimension module of
$A$ is
\[
 \mathcal D_A=\varinjlim(\mathbb Z^{G_A^0},A^\transpose),
 \ \ \
 \mathcal D_A^+=\{[z,n]:z\in\Zp^{G_A^0},\ n\geq 0\}.
\]
The natural identification is 
$$
(\Kgr(\Lk{G_A}),\Kgr(\Lk{G_A})^+,u_A)
 \cong (\mathcal D_A,\mathcal D_A^+,[\one,0])
$$
where $x[z,n]=[A^\transpose z,n]$. This follows from the finite $Q_n(G_A)$ stages and Dade's equivalence (see for instance \cite{HazratDynamics,AraPardo}). The action is consistent with
\[
 v\Lk{G_A}(1)\cong\bigoplus_{s(e)=v}r(e)\Lk{G_A}.
\]

If $R,S$ implement a shift equivalence between $A$ and $B$ of lag $l \geq 1$, then the induced map $\widehat R:\mathcal D_A\longrightarrow\mathcal D_B$ given by $\widehat R([z,n])=[R^\transpose z,n]$ is an ordered $\mathbb Z[x,x^{-1}]$-module isomorphism, with inverse $x^{-\ell}\widehat S$. In particular, shift equivalence gives an isomorphism of ordered graded Grothendieck groups over every field. The two graded classification assertions predict:
\begin{enumerate}
\item isomorphic ordered graded Grothendieck groups imply graded Morita equivalence of the Leavitt path algebras;
\item an isomorphism of these ordered modules taking $u_A$ to $u_B$ implies graded isomorphism of the Leavitt path algebras.
\end{enumerate}

\begin{cor}\label{cor:unpointed}
There are primitive adjacency matrices $A$ and $B$ such that, over every field $k$, the ordered modules $\Kgr(\Lk{G_A})$ and
$\Kgr(\Lk{G_B})$ are isomorphic, but $\Lk{G_A}$ and $\Lk{G_B}$ are not graded Morita equivalent.
\end{cor}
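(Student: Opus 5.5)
The plan is to combine the Kim--Roush counterexample to Williams' conjecture with Theorem~\ref{thm:main-classification} and the identification of the ordered graded Grothendieck group with the dimension module. I will take $A$ and $B$ to be the primitive matrices constructed by Kim and Roush in \cite{KimRoush}. They are shift equivalent over $\Zp$ but not strong shift equivalent over $\Zp$. Primitive matrices are irreducible and not the zero $1\times1$ matrix, so they have no zero rows or columns. Hence $A$ and $B$ are finite essential adjacency matrices, and every result stated above for essential matrices applies to them.

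First I show the ordered modules are isomorphic over every field $k$. Fix $R,S$ over $\Zp$ implementing $A\SE B$ with lag $\ell$. As recalled before the statement, $\widehat R([z,n])=[R^\transpose z,n]$ is a well-defined $\mathbb Z[x,x^{-1}]$-module map $\mathcal D_A\to\mathcal D_B$. It is well defined and commutes with $x$ because $AR=RB$ transposes to $R^\transpose A^\transpose=B^\transpose R^\transpose$. It is invertible with inverse $x^{-\ell}\widehat S$, because $RS=A^\ell$ and $SR=B^\ell$. Both $\widehat R$ and $\widehat S$ have nonnegative entries, so they preserve the positive cones, and $x^{\pm1}$ preserves the cones as well. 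Therefore $\widehat R$ is an isomorphism of ordered modules. Composing with the natural identification $(\Kgr(\Lk{G_A}),\Kgr(\Lk{G_A})^+)\cong(\mathcal D_A,\mathcal D_A^+)$, and the same for $B$, gives an order-preserving $\mathbb Z[x,x^{-1}]$-module isomorphism $\Kgr(\Lk{G_A})\cong\Kgr(\Lk{G_B})$. This identification does not depend on the field, so the isomorphism exists for every $k$.

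Second, suppose for some field $k$ that $\Lk{G_A}$ and $\Lk{G_B}$ were graded Morita equivalent. Theorem~\ref{thm:main-classification} would then give $A\SSE B$, which contradicts the Kim--Roush result. So the algebras are not graded Morita equivalent over any $k$.

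The main thing to verify is that the cited counterexample really fits the conventions used here. Kim and Roush work with shift equivalence and strong shift equivalence over $\Zp$, defined by the same equations as in Section~\ref{sec:prelim}. Their matrices are primitive, which gives essentiality. Transposition does not matter, since $A\SSE B$ holds if and only if $A^\transpose\SSE B^\transpose$, and the same is true for $\SE$. Apart from this, the argument only combines results already stated.
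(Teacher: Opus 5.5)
Your proposal is correct and follows the same route as the paper. You take the primitive Kim--Roush matrices and use the shift equivalence to get the ordered $\mathbb Z[x,x^{-1}]$-module isomorphism $\widehat R$ with inverse $x^{-\ell}\widehat S$, hence isomorphic graded $K_0$ over every field; then Theorem~\ref{thm:main-classification} rules out graded Morita equivalence. Your extra checks (essentiality from primitivity, well-definedness and positivity of $\widehat R$, invariance under transposition) only spell out details the paper states briefly.
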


\begin{proof}
Kim and Roush constructed primitive nonnegative integer matrices $A,B$ which are shift equivalent but not strong shift equivalent
\cite[Section~7]{KimRoush}. The matrices are essential, and shift
equivalence gives the ordered module isomorphism above. If
$\Lk{G_A}$ and $\Lk{G_B}$ were graded Morita equivalent,
Theorem~\ref{thm:main-classification} would give $A\SSE B$, a contradiction.
\end{proof}

We adjust the unit using only in-splittings with nonempty incoming
parts, so that every matrix in the construction remains essential throughout.

\begin{lem}\label{lem:unit-realization}
Let $B$ be a finite primitive adjacency matrix indexed by $V_B$, and suppose that $B$ is not a permutation matrix. Let $m\in\mathbb{Z}_{>0}^{V_B}$ where $B^t m\ge m$. Then there is a primitive adjacency matrix $C$ such that $G_C$ is obtained from $G_B$ by in-splittings with nonempty incoming parts, so that $C\sim_{\mathrm{SSE}}B$. Moreover, there is an ordered $\mathbb{Z}[x,x^{-1}]$-module isomorphism $\eta\colon\mathcal{D}_C\longrightarrow\mathcal{D}_B$ satisfying $\eta([\mathbf{1}_C,0])=[m,0]$.
\end{lem}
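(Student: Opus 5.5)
The plan is to build $C$ by a finite sequence of in-splittings of $G_B$ and to take for $\eta$ the composite of the dimension-module isomorphisms induced by the division matrices of these splittings. The target $[m,0]$ will appear as the vector counting, for each $v\in V_B$, how many vertices of $G_C$ descend from $v$.

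\emph{One in-splitting.} Let $G'$ be obtained from $G$ (adjacency matrix $A$) by in-splitting a vertex $v$. This means partitioning the edges entering $v$ into nonempty parts $\mathcal E^1,\dots,\mathcal E^p$ and replacing $v$ by $v^1,\dots,v^p$. Two matrices encode the splitting:
\begin{enumerate}
\item the division matrix $D$, indexed by $V_{G'}\times V_G$, with $D_{v^i,v}=1$ and all other entries of the $v^i$-rows zero (and $D_{u,u}=1$ for unsplit $u$);
\item the edge matrix $E$, indexed by $V_G\times V_{G'}$, whose $(u,v^i)$ entry counts the edges of $\mathcal E^i$ leaving $u$.
\end{enumerate}
Then $A=ED$ and $A'=DE$, so $A'\ESSE A$. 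The intertwining $A'D=DED=DA$ shows that $\widehat D([z,n])=[D^\transpose z,n]$ is an ordered $\mathbb Z[x,x^{-1}]$-module isomorphism $\mathcal D_{A'}\to\mathcal D_A$, exactly as in the shift-equivalence discussion above (lag one, with inverse $x^{-1}\widehat E$). The key property is that $D^\transpose\one_{A'}$ is the vector whose $v$-entry is the number of pieces into which $v$ was split. Composing along a chain $B=C_0,C_1,\dots,C_r=C$ of such splittings, the composite isomorphism $\eta$ sends $[\one_C,0]$ to $[y,0]$, where $y_v$ is the number of vertices of $G_C$ lying over $v$. Conjugacy invariance gives primitivity of $C$: in-splittings preserve conjugacy of the edge shifts, hence mixing. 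Essentiality is preserved because every part is nonempty.

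\emph{Realizing $y=m$.} Since $B$ is primitive and not a permutation matrix, the number $((B^\transpose)^n\one)_v=|G_B^nv|$ of paths of length $n$ ending at $v$ tends to infinity for every $v$. Fix $n$ with $|G_B^nv|\ge m_v$ for all $v$. Performing $n$ successive complete in-splittings produces the graph whose vertices are the paths of length $n$ (the higher-block presentation), with $|G_B^nv|$ vertices over $v$. I would instead perform \emph{partial} splittings level by level, grouping paths, so that at the end exactly $m_v$ vertices lie over $v$. Concretely, at stage $j$ each vertex over $v$ corresponds to a nonempty class of paths of length $j$ ending at $v$. At stage $j+1$ I refine the partition induced by the last edge, merging classes where necessary, so that the counts are nondecreasing in $j$ and end at $m_v$.

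The main obstacle is organizing this refinement so that each step is a genuine in-splitting of the \emph{current} graph with nonempty parts, and so that the counts reach exactly $m_v$. Merging only at the first differing step and keeping path classes nested should make each step a legitimate splitting of an already-split vertex. This is where I expect the hypothesis $B^\transpose m\ge m$ to enter. After one full step, the number of edges entering the copies over $v$ is governed by $B^\transpose$, so the inequality guarantees that a target count $m_v$ can be distributed among the incoming edges from the current copies, giving nonempty parts at every stage. If the stagewise bookkeeping fails, I would fall back on the identity $[m,0]=[(B^\transpose)^jm,j]$. This lets me instead realize $(B^\transpose)^jm$ and then compose $\eta$ with the automorphism $x^{\pm j}$, which is again an ordered $\mathbb Z[x,x^{-1}]$-module isomorphism.
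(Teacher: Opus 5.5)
Your setup is correct. The factorization $A=ED$, $A'=DE$, the isomorphism $\widehat D$ with inverse $x^{-1}\widehat E$, the computation $\eta([\one_C,0])=[Q^\transpose\one,0]$ with $Q$ the composite division (label) matrix, and primitivity via conjugacy all go through. The gap is the realization step, which is the real content of the lemma and which you leave as the ``main obstacle''. Your heuristic for it is also off: $B^\transpose m\ge m$ controls incoming edges only once the counts already equal $m$, not at intermediate stages. If $A'$ is the current adjacency matrix and $c=Q^\transpose\one$, then $A'Q=QB$ shows that the copies of $v$ receive $(B^\transpose c)_v$ edges in total, and this may be smaller than $m_v$. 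For $B=\begin{pmatrix}1&1\\1&0\end{pmatrix}$, $m=(2,2)$ and $c=\one$, one has $(B^\transpose c)_2=1$, so the second vertex cannot be split before the first. Choosing $n$ with $|G_B^nv|\ge m_v$ and merging path classes does not settle this: after merging, the counts at later levels are governed by $B^\transpose c$ for the merged $c$, not by $|G_B^nv|$. The fallback of realizing $(B^\transpose)^jm$ changes nothing, since that vector satisfies the same hypothesis and raises the same question.

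The missing idea is a progress claim. Suppose $\one\le c\le m$, $c\ne m$ and $B^\transpose c\ge c$ (the last inequality is automatic, since the current graph has no sources). Then some $v$ has $c_v<m_v$ and $(B^\transpose c)_v>c_v$, so some copy of $v$ receives at least two edges and can be split into two nonempty parts. Equivalently, the monotone iteration $c\mapsto\min(m,B^\transpose c)$ behind your level-by-level scheme cannot stall below $m$. This is where $B^\transpose m\ge m$, irreducibility and the non-permutation hypothesis are all needed; the last one is used genuinely here, not just to make $|G_B^nv|\to\infty$. The paper proves the claim as follows. If it fails, put $J=\{v:c_v<m_v\}$. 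Then $(B^\transpose c)_v=c_v$ on $J$, so $(B^\transpose(m-c))_v\ge(m-c)_v>0$ there, and every $v\in J$ has a predecessor in $J$. Let $a=\min_{v\in J}c_v$ and $S=\{v\in J:c_v=a\}$. For $v\in S$ with predecessor $u\in J$, the chain $a=c_v=\sum_wB_{wv}c_w\ge B_{uv}c_u\ge a$ forces $B_{uv}=1$, $u\in S$, and $B_{wv}=0$ for $w\ne u$. Thus $S$ receives no edges from outside, so $S=V_B$ by irreducibility. Every vertex then has in-degree one and $G_B$ is a cycle, contradicting that $B$ is not a permutation matrix. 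With this claim in hand, splitting one copy at a time (as the paper does) or your simultaneous splittings both end at exactly $c=m$.
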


\begin{proof}
We first describe one in-splitting. Let $D$ be a primitive adjacency matrix which is not a permutation matrix. Split one vertex into two copies by partitioning its incoming edges into two nonempty parts, so that every other vertex has one copy. Assign each old edge to the copy of its range specified by the partition, and duplicate it once for each copy of its source.

Let $T$ have rows indexed by the new vertices and columns indexed by the old vertices, with $T_{i,a}=1$ when $i$ is a copy of $a$, and zero otherwise. Let $U_{a,j}$ count the old edges from $a$ assigned to the new range vertex $j$. The old and new adjacency matrices satisfy $D=UT$ and $D'=TU$. Thus the in-splitting gives an elementary strong shift equivalence. 

If $D$ is primitive, then so is $D'$. Indeed, for every $r\ge0$, we have that $(D')^{r+1}=TD^rU$. Given new vertices $i,j$, let $a$ be the parent of $i$, and choose $b$ with $U_{b,j}>0$. Such a $b$ exists because every column of $U$ is nonzero. Since $D$ is irreducible, there is an $r\ge0$ with $(D^r)_{a,b}>0$, and hence
\[
    (D')^{r+1}_{i,j}=(TD^rU)_{i,j}>0.
\]
Thus $D'$ is irreducible. If $D^r>0$, then $TD^rU>0$, since every row of $T$ and every column of $U$ is nonzero. Hence $D'$ is also primitive.

The associated ordered $\mathbb{Z}[x,x^{-1}]$-module isomorphism $\widehat{T}\colon\mathcal{D}_{D'}\longrightarrow\mathcal{D}_D$ is given by $\widehat{T}([z,n])=[T^t z,n]$. Its inverse is $x^{-1}\widehat{U}$, or explicitly $\widehat{T}^{-1}([z,n])=[U^t z,n+1]$. The factorizations above ensure that these maps are well defined, mutually inverse, commute with $x$, and preserve the positive cones.

We now construct the required sequence of in-splittings. Every
current vertex retains a label in $V_B$. Let $Q_D$ be the matrix
whose $(i,v)$-entry is $1$ precisely when the current vertex $i$ is
labelled $v$, and define $c=Q_D^t\mathbf{1}_D$. Thus $c_v$ is the number of current copies of $v$. Initially $D=B$, $Q_D=I$, and $c=\mathbf{1}_B$.

Every current vertex labelled $u$ has exactly $B_{u,v}$ outgoing
edges whose ranges are labelled $v$. This property is preserved by
each in-splitting, so $DQ_D=Q_DB$. In particular, $(B^tc)_v$ is the total number of edges entering the $c_v$ copies of $v$. Primitivity gives
\[
    B^tc=Q_D^tD^t\mathbf{1}_D
        \ge Q_D^t\mathbf{1}_D=c.
\]
We increase $c$ to $m$, one coordinate at a time, while maintaining $c\le m$. Suppose $0<c\le m$, $B^tc\ge c$, and $c\ne m$, then some $v\in V_B$ satisfies $c_v<m_v$ and $(B^tc)_v>c_v$. To provev this claim, suppose this fails, and set $d=m-c$ and $J=\{v\in V_B:d_v>0\}$. Then $J\ne\varnothing$ and $(B^tc)_v=c_v$ for every $v\in J$. Consequently, for $v\in J$ we have
\[
    (B^td)_v=(B^tm)_v-(B^tc)_v
             \ge m_v-c_v=d_v>0
\]
Take $a=\min_{v\in J}c_v$, and denote $S=\{v\in J:c_v=a\}$. Fix $v\in S$. Since $(B^td)_v>0$, there is a predecessor $u\in J$
with $B_{u,v}>0$. By the definition of $a$, we get that
\[
    a=c_v=\sum_w B_{w,v}c_w
      \ge B_{u,v}c_u
      \ge a.
\]
All inequalities are therefore equalities. This implies that
$B_{u,v}=1$, $c_u=a$, and $B_{w,v}=0$ for all $w\ne u$. Thus every vertex of $S$ has exactly one incoming edge, whose source
also belongs to $S$. In particular, $S$ is nonempty and receives no edges from its complement. Irreducibility forces $S=V_B$. Every vertex of $G_B$ therefore has exactly one incoming edge. Strong connectivity then forces $G_B$ to be a directed cycle, contradicting the assumption that $B$ is not a permutation matrix. This proves the claim.

Now, for a $v$ supplied by the claim, one of its $c_v$ copies receives at least two edges. Split that copy using two nonempty incoming parts. This increases $c_v$ by one and preserves $c\leq m$. After exactly $\sum_v(m_v-1)$ steps, the construction terminates with $c=m$. Let $C$ be the primitive adjacency matrix of the resulting $G_C$ with vertices $V_C$.

The product of the successive matrices $T$ is the matrix $Q$ with rows indexed by $V_C$ and columns indexed by $V_B$, where $Q_{i,v}=1$ precisely when the vertex $i$ is labelled $v$. Thus $Q^\transpose\one=m$. Composing the isomorphisms $\widehat T$ gives $\eta$ and yields $\eta([\one,0])=[Q^\transpose\one,0]=[m,0]$.
\end{proof}

\begin{cor} \label{cor:pointed}
There are finite primitive adjacency matrices $A,C$ such that, for every field $k$, there is a unit-preserving ordered $\mathbb Z[x,x^{-1}]$-module isomorphism $\Kgr(\Lk{G_A})\cong\Kgr(\Lk{G_C})$, but $\Lk{G_A}$ and $\Lk{G_C}$ are not graded isomorphic as graded rings.
\end{cor}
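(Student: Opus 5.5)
We take the Kim--Roush pair from Corollary~\ref{cor:unpointed}: primitive matrices $A$ and $B$ with $A\SE B$ but $A\not\SSE B$. Shift equivalence gives an ordered $\mathbb Z[x,x^{-1}]$-module isomorphism $\psi\colon\mathcal D_A\to\mathcal D_B$. Our goal is to replace $B$ by a strong shift equivalent primitive $C$ whose distinguished unit is matched with $u_A$. Then a graded isomorphism $\Lk{G_A}\cong\Lk{G_C}$ would be a graded Morita equivalence. By Theorem~\ref{thm:main-classification} it would force $A\SSE C\SSE B$, which is a contradiction.

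The first step is to normalize the image of the unit. Set $\psi([\one_A,0])=[z,n]$, which is a positive element of $\mathcal D_B$. We may compose $\psi$ with powers of the order automorphism $x$ of $\mathcal D_B$, since $x$ acts by $[A^\transpose z,n]\mapsto$ shift. Because $[z,n]=[(B^\transpose)^{j}z,n+j]$, multiplying by $x^{n+j}$ lets us assume the image is $[m,0]$ with $m=(B^\transpose)^{j}z$. We want $m\in\mathbb Z_{>0}^{V_B}$ and $B^\transpose m\ge m$.

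Positivity of $m$ should come from primitivity. The element $[\one_A,0]$ is an order unit, so $z\neq0$ and $z\ge0$ once $j$ is large. Since $(B^\transpose)^{j}>0$ for large $j$, every coordinate of $m$ becomes strictly positive.

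The inequality $B^\transpose m\ge m$ is the subtle point. On the $A$ side, $A^\transpose\one\ge\one$ because $A$ has no zero columns, so $x[\one_A,0]-[\one_A,0]\in\mathcal D_A^+$. Applying $\psi$ gives $[(B^\transpose-I)m,0]\in\mathcal D_B^+$. This only says that $(B^\transpose)^{i}(B^\transpose-I)m\ge0$ for some $i$. We therefore replace $m$ by $m'=(B^\transpose)^{i}m$, which again represents $[m,0]$ up to a further power of $x$. For this $m'$ we get $B^\transpose m'\ge m'$, and $m'$ is still strictly positive.

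We also need $B$ not to be a permutation matrix. This holds because the Kim--Roush matrices are primitive of size greater than one; alternatively, a primitive permutation matrix must be $1\times1$, which is not the case here.

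The next step applies Lemma~\ref{lem:unit-realization}. It produces a primitive $C\SSE B$ together with an isomorphism $\eta\colon\mathcal D_C\to\mathcal D_B$ satisfying $\eta([\one_C,0])=[m',0]$. Then $\eta^{-1}\circ x^{N}\circ\psi$ is a unit-preserving ordered module isomorphism $\mathcal D_A\to\mathcal D_C$. Under the natural identification, it gives a unit-preserving isomorphism $\Kgr(\Lk{G_A})\cong\Kgr(\Lk{G_C})$ over every field $k$.

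Finally, suppose $\Lk{G_A}\cong\Lk{G_C}$ as graded rings. A graded ring isomorphism induces a graded Morita equivalence, since restriction along it is an equivalence of the graded module categories commuting with shifts. Note this need not be $k$-linear. Theorem~\ref{thm:main-classification} then gives $A\SSE C$. Combined with $C\SSE B$, this yields $A\SSE B$, contradicting Kim--Roush.

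The main obstacle is the normalization step: arranging both $m>0$ and $B^\transpose m\ge m$ by pushing the representative forward with powers of $B^\transpose$, while keeping track of the compensating power of $x$.
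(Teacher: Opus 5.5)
Your proof is correct. It differs from the paper only in how the unit is normalized.

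\textbf{The paper's route.} The paper never works with an abstract isomorphism. It takes the lag-$\ell$ matrices $R,S$ and sets $m=R^\transpose\one$ directly, so $\widehat R([\one,0])=[m,0]$ already sits at stage $0$. Strict positivity of $m$ holds because a zero column of $R$ would give a zero column of $SR=B^\ell$. The inequality $B^\transpose m\ge m$ is one line: transpose $AR=RB$ and use $A^\transpose\one\ge\one$. The pointed isomorphism is then simply $\eta^{-1}\widehat R$.

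\textbf{Your route.} You start from an arbitrary ordered $\mathbb Z[x,x^{-1}]$-module isomorphism $\psi$ and build the needed $m$ yourself. You compose with powers of the order automorphism $x$, using $x^{p}[w,p]=[w,0]$, and push the representative forward by $B^\transpose$. Primitivity of $B$ then gives strict positivity. Applying $\psi$ to the positive element $x[\one_A,0]-[\one_A,0]$ gives $(B^\transpose)^i(B^\transpose-I)m\ge 0$ for some $i$, and one more push-forward yields $B^\transpose m'\ge m'$.

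\textbf{Comparison.} Your route costs some bookkeeping: the final isomorphism is $\eta^{-1}x^{N}\psi$ rather than $\eta^{-1}\widehat R$. In exchange it proves something slightly more general. Every ordered module isomorphism $\mathcal D_A\to\mathcal D_B$, not only those of the form $\widehat R$, becomes unit-preserving onto some $\mathcal D_C$ with $C\SSE B$ after twisting by a power of $x$. Your check that a primitive permutation matrix must be $1\times 1$ also justifies a point the paper only asserts.

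\textbf{Small slips.} The phrase ``$x$ acts by $[A^\transpose z,n]\mapsto$ shift'' should read $x[z,n]=[B^\transpose z,n]$ on $\mathcal D_B$. Likewise, ``$z\ge 0$ once $j$ is large'' should say $(B^\transpose)^j z\ge 0$ for large $j$.
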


\begin{proof}
Let $A$ and $B$ be the primitive Kim--Roush matrices from \cite[Section~7]{KimRoush}, which are not cyclic permutation matrices. Since they are shift equivalent, there are some nonnegative matrices $R,S$ implementing a shift equivalence of lag $\ell\geq 1$ between $A$ and $B$. Take $m=R^\transpose\one$. Every $m_v$ is positive. Indeed, a zero column of $R$ would give a zero column of
$SR=B^\ell$, which is impossible for a primitive matrix. Since $A$ is primitive, $A^\transpose\one\geq\one$. Transposing $AR=RB$ therefore gives
\[
 B^\transpose m
   =R^\transpose A^\transpose\one
   \geq R^\transpose\one=m.
\]
Apply Lemma~\ref{lem:unit-realization} to obtain a primitive matrix $C$ and an isomorphism $\eta:\mathcal D_C\to\mathcal D_B$ with $\eta([\one,0])=[m,0]$. The ordered module isomorphism $\beta=\eta^{-1}\widehat R:\mathcal D_A\longrightarrow\mathcal D_C$ preserves the distinguished units because $\widehat R([\one,0])=[m,0]$. Under the identification of dimension triples with graded $K_0$ triples, this is a pointed graded $K$-theory isomorphism over every field.

On the other hand we have that $C\SSE B$. Thus, if $\Lk{G_A}$ and $\Lk{G_C}$ were graded Morita equivalent, Theorem~\ref{thm:main-classification} would give $A\SSE C\SSE B$, contrary to the Kim--Roush construction. A graded ring isomorphism is therefore impossible as well.
\end{proof}

\end{document}